\title[Fregean abstraction in ZF: a deflationary account]{Fregean abstraction in Zermelo-Fraenkel set theory: a deflationary account}
\author{Joel David Hamkins}
\address[Joel David Hamkins]
{O'Hara Professor of Philosophy and Mathematics, University of Notre Dame, 100 Malloy Hall, Notre Dame, IN 46556 USA \&\ Associate Faculty Member, Professor of Logic, Faculty of Philosophy, University of Oxford, UK}
\email{jdhamkins@nd.edu}
\urladdr{http://jdh.hamkins.org}
\thanks{I am grateful to Paddy Blanchette for insightful discussion and for pointing me to several relevant passages in Frege. Commentary can be made about this article on my blog at \href{http://jdh.hamkins.org/fregean-abstraction-deflationary-account}{http://jdh.hamkins.org/fregean-abstraction-deflationary-account}.}
\newtheorem{theorem}{Theorem}
\newtheorem*{theorem*}{Theorem}
\newtheorem*{maintheorem*}{Main Theorem}
\newtheorem*{maintheorems*}{Main Theorems}
\newtheorem{corollary}[theorem]{Corollary}
\newtheorem*{corollary*}{Corollary}
\newtheorem*{corollaries*}{Corollaries}
\newtheorem{observation}[theorem]{Observation}
\theoremstyle{definition}
\newtheorem*{definition*}{Definition}
\newtheorem*{question*}{Question}
\newtheorem*{questions*}{Questions}
\newtheorem*{mainquestion*}{Main Question} 
\newtheorem*{openquestion*}{Open Question} 
\theoremstyle{remark}
\newcommand{\QED}{\end{proof}}
\def\proclaim[#1]{{\bf #1}}
\def\BF#1.{{\bf #1.}}
\def\says#1:#2\par{\item[#1] #2\par}
\newcommand{\Lowenheim}{L\"owenheim}
\newcommand{\dotminus}{\mathbin{\text{\@dotminus}}}
\newcommand{\@dotminus}{%
  \ooalign{\hidewidth\raise1ex\hbox{.}\hidewidth\cr$\m@th-$\cr}%
}
\newcommand{\of}{\subseteq}
\newcommand{\set}[1]{\{\,{#1}\,\}}
\newcommand\equinumerous{\simeq}
\renewcommand{\setminus}{\raise.3ex\hbox{\rotatebox{-20}{$-$}}} 
\newcommand{\smalllt}{\mathrel{\mathchoice{\raise2pt\hbox{$\scriptstyle<$}}{\raise1pt\hbox{$\scriptstyle<$}}{\raise0pt\hbox{$\scriptscriptstyle<$}}{\scriptscriptstyle<}}}
\newcommand{\smallleq}{\mathrel{\mathchoice{\raise2pt\hbox{$\scriptstyle\leq$}}{\raise1pt\hbox{$\scriptstyle\leq$}}{\raise1pt\hbox{$\scriptscriptstyle\leq$}}{\scriptscriptstyle\leq}}}
   \def\DHLhksqrt#1#2{%
   \setbox0=\hbox{$#1\sqrt{#2\,}$}\dimen0=\ht0
   \advance\dimen0-0.2\ht0
   \setbox2=\hbox{\vrule height\ht0 depth -\dimen0}%
   {\box0\lower0.4pt\box2}}
\def\[#1]{\mathopen{\lbrack\!\lbrack}#1\mathclose{\rbrack\!\rbrack}}
\newbox\gnBoxA
\newbox\gnBoxB
\newdimen\gnCornerHgt
\newdimen\gnArgHgt
\def\gcode #1{%
\setbox\gnBoxA=\hbox{$#1$}%
\setbox\gnBoxB=\hbox{$\bar #1$}%
\gnArgHgt=\ht\gnBoxB%
\ifnum     \gnArgHgt<\gnCornerHgt \gnArgHgt=0pt%
\else \advance \gnArgHgt by -\gnCornerHgt%
\fi \raise\gnArgHgt\hbox{\tiny$\ulcorner$} \box\gnBoxA %
\raise\gnArgHgt\hbox{\tiny$\urcorner$}}
\newcommand{\UnderTilde}[1]{{\setbox1=\hbox{$#1$}\baselineskip=0pt\vtop{\hbox{$#1$}\hbox to\wd1{\hfil$\sim$\hfil}}}{}}
\newcommand{\Undertilde}[1]{{\setbox1=\hbox{$#1$}\baselineskip=0pt\vtop{\hbox{$#1$}\hbox to\wd1{\hfil$\scriptstyle\sim$\hfil}}}{}}
\newcommand{\undertilde}[1]{{\setbox1=\hbox{$#1$}\baselineskip=0pt\vtop{\hbox{$#1$}\hbox to\wd1{\hfil$\scriptscriptstyle\sim$\hfil}}}{}}
\newcommand{\UnderdTilde}[1]{{\setbox1=\hbox{$#1$}\baselineskip=0pt\vtop{\hbox{$#1$}\hbox to\wd1{\hfil$\approx$\hfil}}}{}}
\newcommand{\Underdtilde}[1]{{\setbox1=\hbox{$#1$}\baselineskip=0pt\vtop{\hbox{$#1$}\hbox to\wd1{\hfil\scriptsize$\approx$\hfil}}}{}}
\renewcommand{\implies}{\mathrel{\rightarrow}}
\renewcommand{\iff}{\mathrel{\leftrightarrow}}
\newcommand{\Iff}{\mathrel{\Longleftrightarrow}}
\def\<#1>{\left\langle#1\right\rangle}
\newcommand{\Ord}{\mathord{{\rm Ord}}}
\newcommand{\ZFC}{{\rm ZFC}}
\newcommand{\ZF}{{\rm ZF}}
\newcommand\ZFfin{\ZF^{\neg\infty}}
\newcommand{\KM}{{\rm KM}}
\newcommand{\GBC}{{\rm GBC}}
\newcommand{\HOD}{{\rm HOD}}
\newcommand{\PA}{{\rm PA}}
\newcommand{\cell}[1]{\boxit{\hbox to 17pt{\strut\hfil$#1$\hfil}}}
\newcommand{\head}[2]{\lower2pt\vbox{\hbox{\strut\footnotesize\it\hskip3pt#2}\boxit{\cell#1}}}
\newcommand{\boxit}[1]{\setbox4=\hbox{\kern2pt#1\kern2pt}\hbox{\vrule\vbox{\hrule\kern2pt\box4\kern2pt\hrule}\vrule}}
\newcommand{\Col}[3]{\hbox{\vbox{\baselineskip=0pt\parskip=0pt\cell#1\cell#2\cell#3}}}
\newcommand{\tapenames}{\raise 5pt\vbox to .7in{\hbox to .8in{\it\hfill input: \strut}\vfill\hbox to
.8in{\it\hfill scratch: \strut}\vfill\hbox to .8in{\it\hfill output: \strut}}}
\newcommand{\Head}[4]{\lower2pt\vbox{\hbox to25pt{\strut\footnotesize\it\hfill#4\hfill}\boxit{\Col#1#2#3}}}
\newcommand{\Dots}{\raise 5pt\vbox to .7in{\hbox{\ $\cdots$\strut}\vfill\hbox{\ $\cdots$\strut}\vfill\hbox{\
$\cdots$\strut}}}
\renewcommand{\UrlFont}{} 
\addcolon\nolinkurl{#1}}\iffieldundef{eprintclass}{}{\UrlFont{\mkbibbrackets{\thefield{eprintclass}}}}}
\addcolon\nolinkurl{#1}\iffieldundef{eprintclass}{}{\UrlFont{\mkbibbrackets{\thefield{eprintclass}}}}}}
\newcommand{\num}{\#}
\begin{document}

\begin{abstract}
The standard treatment of sets and definable classes in first-order Zermelo-Fraenkel set theory accords in many respects with the Fregean foundational framework, such as the distinction between objects and concepts. Nevertheless, in set theory we may define an explicit association of definable classes with set objects $F\mapsto\varepsilon F$ in such a way, I shall prove, to realize Frege's Basic Law~V as a \ZF\ theorem scheme, Russell notwithstanding. A similar analysis applies to the Cantor-Hume principle and to Fregean abstraction generally. Because these extension and abstraction operators are definable, they provide a deflationary account of Fregean abstraction, one expressible in and reducible to set theory---every assertion in the language of set theory allowing the extension and abstraction operators $\varepsilon F$, $\num G$, $\alpha H$ is equivalent to an assertion not using them. The analysis thus sidesteps Russell's argument, which is revealed not as a refutation of Basic Law~V as such, but rather as a version of Tarski's theorem on the nondefinability of truth, showing that the proto-truth-predicate ``$x$ falls under the concept of which $y$ is the extension'' is not expressible.
\end{abstract}

\maketitle

\section{Introduction}

The treatment of sets and classes in Zermelo-Fraenkel set theory instantiates in many respects the principal features of the Fregean foundational framework, in which Frege distinguishes sharply between \emph{objects} and \emph{concepts}. The realm of objects, a universe of individuals, constitutes the domain of discourse, a domain over which the quantifiers may range, and this domain is considered under the guise of diverse Fregean concepts, each serving in effect as a predicate on that domain, picking out a collection of those objects, for a given object either falls under a concept or does not. Frege analyzes concepts through their course of values, viewing them in effect as functions mapping every object either to an object standing for truth or one for falsity.

The usual practice and development of sets and classes in Zermelo-Fraenkel set theory accords very well with this Fregean framework, for set theorists consider the cumulative set-theoretic universe $V$, the collection of all sets, as constituting a domain of individual objects, the sets, about which we make mathematical assertions, with our quantifiers $\forall x$ and $\exists x$ ranging over all the sets in this universe, and we consider these sets in the light of the various set-theoretic properties $\varphi(x)$ expressible in the language of set theory, forming the corresponding definable classes $\set{x\mid\varphi(x)}$, thereby undertaking predication by those properties. Thus, we have a universe of first-order objects, the sets, in the context of diverse Fregean concepts, predicates on that domain, the classes. We refer to or name such a concept or class $\set{x\mid\varphi(x)}$ by providing the definition $\varphi$ along with any parameters, if any, used in that definition. Different intensional descriptions $\varphi$ or $\phi$ may end up describing the same class extensionally, if $\varphi(x)\iff \phi(x)$ holds for every individual $x$. In this way, we may come to view the usual understanding of sets and classes in \ZF\ set theory in a Fregean light.

Andreas Blass expresses this view, that proper classes are not part of the set-theoretic ontology, but rather are merely a convenience for predication.
\begin{quotation}
  Proper classes are not objects. They do not exist. Talking about them is a convenient abbreviation for certain statements about sets. (For example, V=L abbreviates ``all sets are constructible.'') If proper classes were objects, they should be included among the sets, and the cumulative hierarchy should%
  \ldots continue much farther, but in fact, it already continues arbitrarily far.
  \ldots%
  %
  Considerations like these are what prompt me to view ZFC as a reasonable foundational system, in contrast to Morse-Kelley set theory.~\cite{Blass.MO71773:Are-proper-classes-objects}

  %
\end{quotation}

Thus, Blass appears to work in precisely the set theory I have described, a set-theoretic realm having only sets as objects, and considering classes only as a shorthand manner of referring to definable predication.

In his well-known treatise on large cardinals, Kanamori~\cite{Kanamori2004:TheHigherInfinite2ed} similarly works in this theory. Notably, he regards the famous Kunen inconsistency, asserting that there is no nontrivial elementary embedding $j:V\to V$, as ``a schema of theorems, one for each $j$.'' That is, he considers the theorem as a scheme of statements making separate claims for each possible definition of such a class $j$.\footnote{I find it curious for Kanamori to emphasize this point with this particular theorem, however, in light of the fact that one may easily refute the existence of nontrivial \emph{definable} elementary embeddings $j:V\to V$ with a soft minimality argument showing that no particular definition (allowing parameters) can admit a smallest critical point of such an embedding; see~\cite{Suzuki1998:NojVtoVinV[G], Suzuki1999:NoDefinablejVtoVinZF}. This soft argument furthermore makes no use of the axiom of choice, while for the class version this is a major open question leading to the so-called choiceless large cardinals.
}

In this article, I aim to investigate set theory from this Fregean perspective, to see how well we can implement his ideas in Zermelo-Fraenkel set theory. Let me emphasize, however, that my project here is not to elucidate Frege's system in set theory, but rather to be inspired by Fregean ideas as a way of understanding what is going on in set theory. In short, I aim to investigate Fregean-inspired set theory rather than to present a set-theoretical Frege.

\section{Basic Law~V}

One of Frege's central ideas was that we might associate every concept $F$ with an object $\varepsilon F$, the \emph{extension} of $F$, an object that represents this concept. In this way, Frege is able to treat concepts as objects, by means of their extensions---he can quantifier over them and refer to them by means of the objects that represent them.

The \emph{extension} terminology carries a connotation today of a collection---the extension of a concept is nowadays commonly understood in set theory as the set or class of fulfilling instances of that concept. Nevertheless, this collection connotation was not part of Frege's conception. For Frege, the extension object $\varepsilon F$ of a concept $F$ was simply an object taken to represent the concept and needn't itself be understood as a set, class, or collection.

Importantly, Frege insisted that the assignment of extension objects to concepts should be a conceptual invariant---the extensions of equivalent concepts should be the same, and the extensions of inequivalent concepts should be different. In other words, $\varepsilon F$ should represent the concept $F$ extensionally, rather than intentionally, and this is expressed by his Basic Law~V.

\theoremstyle{theorem}
\newtheorem*{BasicLawV}{Basic Law~V}
\begin{BasicLawV}
The extension of a concept $F$ is the same as the extension of concept $G$ if and only if $Fx\iff Gx$ for every $x$.
$$\varepsilon F=\varepsilon G\quad\Iff\quad\forall x\, (Fx\iff Gx).$$
\end{BasicLawV}

Let me refer to the problem of finding a system of extension assignments $F\mapsto\varepsilon F$ fulfilling Basic Law~V as the \emph{extension-assignment problem}. Are we able in general to define extension objects for all our concepts in a way that fulfills this conceptual invariance requirement?


Basic Law~V is an instance of the more general process of \emph{Fregean abstraction}, by which we move from a notion of equivalence to the abstraction of that equivalence, a classification invariant for that equivalence. With Basic Law~V, concepts $F$ and $G$ are logically equivalent in the sense that they have the same course of values $\forall x\ (Fx\iff Gx)$ if and only if they have the same corresponding abstraction, that is, the same extensions of those concepts $\varepsilon F=\varepsilon G$. Similarly, according to the Cantor-Hume principle, considered in section~\ref{Section.Cantor-Hume}, concepts $F$ and $G$ are \emph{equinumerous} in that there is a one-to-one correspondence of the instances of $F$ with instances of $G$ if and only if they have the same corresponding abstractions, that is, the same number of elements $\num F=\num G$. And more generally, for any notion of equivalence $\sim$, the corresponding Fregean abstractions $\alpha F$ would be classification invariants in the sense that $F\sim G$ if and only if $\alpha F=\alpha G$.

In this article, I shall provide deflationary accounts for all of these various Fregean abstraction principles, providing \ZF\ definitions of suitable extension objects $\varepsilon F$, number objects $\num F$, and the various abstraction objects $\alpha F$ for any concept of equivalence, and prove as \ZF\ theorem schemes that they fulfill the desired classification invariant requirements expressed by Basic Law~V, the Cantor-Hume principle, and Fregean abstraction generally. I describe this account as deflationary because the abstractions I provide are expressible and defined in set theory, and as a consequence they are entirely eliminable---every assertion in the language with the abstraction operators is equivalent to an assertion in the plain language of set theory without them.


\section{Basic Law~V holds in \ZF\ set theory}

Let me prove the main result, namely, that Basic Law~V holds in the set-theoretic Fregean framework I have described. Namely, I shall define an explicit association of definable classes $F$ with set objects $\varepsilon F$ representing them in such a way that the association $F\mapsto\varepsilon F$ is definable in set theory in various senses and such that Basic Law~V is fulfilled. The theorem thus realizes Basic Law~V as a provable theorem scheme in Zermelo-Fraenkel set theory.\goodbreak

\newpage
\begin{theorem}\label{Theorem.Basic-law-V-in-ZF}
In Zermelo-Fraenkel \ZF\ set theory, there is an explicit association of definable classes $F$ with set objects $\varepsilon F$, such that:
\begin{enumerate}
  \item The association $F\mapsto\varepsilon F$ is definable in the following senses:
    \begin{enumerate}
    \item For any intentionally given class $F=\set{x\mid\varphi(x,z)}$, the extension object is first-order definable in set theory, using the same parameter $z$ as provided in the description of $F$:
     $$\forall y\ \bigl(\ y=\varepsilon F\quad\text{if and only if}\quad \theta(y,z)\ \bigr).$$
    \item For any class $I$ and class function $i\mapsto F_i$, presented uniformly as a definable class $F=\set{(i,x)\mid i\in I, x\in F_i}\of I\times V$, the function $i\mapsto \varepsilon F_i$ is first-order definable in set theory, again with the same parameters used to define $F$.
    \item The full mapping $F\mapsto\varepsilon F$ is second-order definable in any model of \ZF\ equipped with its definable classes:
    $$\forall F\forall y\ \bigl(\ y=\varepsilon F\quad\text{if and only if}\quad\Theta(F,y)\ \bigr)$$
    \end{enumerate}
  \item The association $F\mapsto\varepsilon F$ fulfills Basic Law~V:
$$\varepsilon F=\varepsilon G\quad\text{ if and only if }\quad \forall x(Fx\iff Gx).$$
\end{enumerate}
\end{theorem}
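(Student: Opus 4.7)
My strategy is to define $\varepsilon F$ by a case split on whether $F$ is a set or a proper class, so that Basic Law~V reduces in one case to set extensionality and in the other to a careful canonical coding that depends only on the extension of $F$. On the set side, I would put $\varepsilon F=(0,F)$; this part is easy because for each formula $\varphi(x,z)$ in the scheme, the predicate ``$\set{x\mid\varphi(x,z)}$ is a set'' is first-order expressible as $\exists y\,\forall x(x\in y\iff\varphi(x,z))$, and in that case the witness $y$ is unique and definable from $z$. Basic Law~V restricted to two sets then collapses to set extensionality, while whenever $F$ is a set and $G$ is a proper class the leading tag $0$ versus $1$ forces $\varepsilon F\neq\varepsilon G$ automatically.

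The crux of the argument, and what I expect to be the main obstacle, is assigning a canonical set $\varepsilon F$ to a proper class $F$ in a way that depends on the extension of $F$ alone. Distinct proper classes must disagree on some intersection $F\cap V_\alpha$, so any representative that records enough of the slices $\<F\cap V_\beta:\beta\in\Ord>$ in a canonical, extension-invariant way would suffice for injectivity. My first attempt would set $\varepsilon F=(1,\alpha(F),F\cap V_{\alpha(F)})$ for a carefully chosen ordinal $\alpha(F)$. The naive choice of $\alpha(F)$ via the Reflection Theorem applied to the defining formula $\varphi$ is syntactic and fails Basic Law~V, since two formulas of different complexity can define the same class yet reflect at different stages. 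I would therefore seek an intrinsic definition of $\alpha(F)$---for example, the least ordinal at which $F\cap V_\alpha$ already encodes enough information about the slices $F\cap V_\beta$ for $\beta<\alpha$ to pin the class down up to any bounded rank---so that $\alpha(F)$ is determined entirely by the extension of $F$ rather than by any particular $\varphi$.

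Once such a canonical $\alpha(F)$ is in place, Basic Law~V for two proper classes $F$ and $G$ becomes transparent: if $\varepsilon F=\varepsilon G$ then their canonical stages and their canonical slices coincide, and by the intrinsic nature of $\alpha(F)$ this forces $F=G$ as classes; conversely, if $F$ and $G$ are extensionally equal then the construction is the same at every step, since every ingredient depends only on the extension. The definability clauses fall out uniformly from the construction: clause~(1a) is the scheme-level observation that the case split and the intrinsic choice of $\alpha$ yield a single formula $\theta_\varphi(y,z)$ built from $\varphi$; clause~(1b) is the remark that the entire recipe commutes with an indexing parameter, so that for a uniformly presented family $i\mapsto F_i$ the map $i\mapsto\varepsilon F_i$ is itself first-order definable; and clause~(1c) collects all the schematic instances into a single second-order formula $\Theta(F,y)$ in the two-sorted language of sets together with their definable classes.
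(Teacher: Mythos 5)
Your set/proper-class case split is fine as far as it goes (the set case does reduce to extensionality), but the proper-class case contains the real content of the theorem and your proposal does not supply it. The plan to take $\varepsilon F=(1,\alpha(F),F\cap V_{\alpha(F)})$ cannot work for any choice of $\alpha(F)$: no set-sized initial segment of a proper class determines the class extensionally, since for any ordinal $\alpha$ and any $s\subseteq V_\alpha$ there are many pairwise inextensionally-distinct definable proper classes $G$ with $G\cap V_\alpha=s$ (modify $F$ arbitrarily above rank $\alpha$). So the forward direction of Basic Law~V, which requires $\varepsilon F=\varepsilon G\Rightarrow\forall x(Fx\iff Gx)$, forces collisions unless the ordinals $\alpha(F)$ themselves separate all such classes, and your candidate definition of $\alpha(F)$ (``the least ordinal at which $F\cap V_\alpha$ pins the class down up to any bounded rank'') is either vacuous---a slice trivially determines all smaller slices, so the least such $\alpha$ is $0$---or asks for something impossible, namely that a bounded fragment of $F$ determine its unbounded behavior. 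The step you flag as ``what I expect to be the main obstacle'' is indeed the main obstacle, and it is left as a wish rather than a construction.

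The missing idea is that the only set-sized datum that \emph{does} determine a definable proper class is intensional, namely a defining formula together with a parameter, and that this syntactic datum can be made extension-invariant by canonicalizing over \emph{all} presentations rather than using the one you were handed: fix a metatheoretic enumeration $\psi_0,\psi_1,\dots$ of formulas, let $\psi_n$ be the least formula that defines the same class as $\varphi(\cdot,z)$ with \emph{some} parameter, and let $u$ be the (Scott's-trick) set of minimal-rank parameters that do so; then $\varepsilon F=(\gcode{\psi_n},u)$ depends only on the extension of $F$, determines $F$, and yields clause (1a) because only finitely many formulas precede $\varphi$ in the enumeration, clause (1b) by absorbing $i$ into the parameter, and clause (1c) via the definable $\Sigma_k$ partial truth predicates. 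You rejected the syntactic route on the grounds that reflection of the given $\varphi$ is presentation-dependent---correct, but the fix is to quantify the presentation away, not to abandon syntax for extensional slices. Note also that with this construction the set/proper-class dichotomy becomes unnecessary: sets and proper classes are handled uniformly.
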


\begin{proof}
To prove the theorem, I shall begin by explaining how to define the extension objects $\varepsilon F$. Suppose that we are presented with a particular definable class $F=\set{x\mid\varphi(x,z)}$, the class of all sets $x$ fulfilling the first-order formula $\varphi(x,z)$, using a fixed set parameter $z$. To have provided a class concept means to have provided such a formula $\varphi$ and parameter $z$ explicitly. I shall associate this class $F$ with a certain set $\varepsilon F$ in such a manner so as to fulfill all the claims of the theorem. We cannot in general take the class $\set{x\mid \varphi(x,z)}$ itself as the extension, since this might be a proper class, not a set, but we need $\varepsilon F$ to be a set. Instead, what I propose is to use the data of the presentation itself to define $\varepsilon F$.

Let us fix in the metatheory a particular enumeration $\psi_0,\psi_1,\psi_2,\ldots$ of the formulas of the first-order language of set theory. The formula $\varphi$ appears somewhere in this enumeration. Let $\psi_n$ be the earliest formula in that enumeration that is capable, with some choice of parameter, of defining the same class that $\varphi$ defines with parameter $z$. That is, $\psi_n$ is the earliest formula on the list for which there is some set $p$ for which $\forall x\bigl(\varphi(x,z)\iff\psi_n(x,p)\bigr)$. We may find such a parameter $p$ of minimal possible rank in the set-theoretic hierarchy. Indeed, let $u$ be the set of all such parameters $p$ of that minimal rank that define the same class as $\varphi$ does with parameter $z$ using that formula $\psi_n$. We now define $\varepsilon F$ be the pair $(\gcode{\psi_n},u)$, where $\gcode{\psi_n}$ is the set object coding the formula $\psi_n$. In short: the extension of a concept is the earliest formula on the list that is capable of defining it with some parameter, together with the set of minimal-rank parameters that do so.

Let us now prove that $\varepsilon F$ is first-order definable in the senses mentioned in the theorem. Suppose that we are presented with a class $F=\set{x\mid\varphi(x,z)}$, presented intentionally, that is, we have been provided explicitly with the defining formula $\varphi$ and the parameter $z$. The formula $\varphi$ itself appears as some $\psi_n$ in the list, and there are only finitely many preceding formulas $\psi_0,\ldots,\psi_n=\varphi$ in the enumeration. To express the property $y=\varepsilon F$, therefore, we may take a disjunction over $i\leq n$ of the assertions that say,
\begin{quote}
``$y$ is a pair $(\gcode{\psi_i},u)$, whose first coordinate is the code of a formula $\psi_i$ and whose second coordinate $u$ is a nonempty set of parameters $p\in u$ for which $$\forall x\ \bigl(\varphi(x,z)\iff\psi_i(x,p)\bigr),\text{ and}$$
and furthermore $u$ is the set of all such minimal-rank $p$ for which this is true, and finally, none of the formulas $\psi_j$ for $j<i$ admit such a parameter.''
\end{quote}
This property exactly describes what it means for $y$ to the be the extension object $\varepsilon F$ as we defined it above, and since we have the formula $\varphi$ as a metatheoretic formula in hand and there are only finitely many preceding formulas $\psi_i$, all of this is expressible by a formula $\theta$ in the first-order language of set theory
$$y=\varepsilon F\quad\Iff\quad \theta(y,z).$$
Thus we establish claim (1a) as a \ZF\ theorem scheme.

Next, suppose that we are provided with a definable class functional $i\mapsto F_i$, associating every object $i$ in a class $I$ with a class $F_i$. We assume that this class functional has been provided intentionally, by means of a uniform defining formula $\varphi$ and parameter $z$, such that
 $$x\in F_i\quad\Iff\quad \varphi(i,x,z).$$
The uniformity of this definition means that it simply reduces to case (1a), for we can simply treat $i$ as another parameter, thereby defining the extensions
 $$y=\varepsilon F_i\quad\Iff\quad \theta(y,i,z)$$
using the same analysis as above. This provides a uniform definition of the class function
 $$i\mapsto\varepsilon F_i.$$
Thus, whenever we have a concept for a function from objects to concepts, we also have a concept for the extensions of those concepts, establishing (1b) as a \ZF\ theorem scheme.

For statement (1c), let us assume we work in a model of \ZF\ equipped for the Henkin semantics with all and only its definable classes, a model of Gödel-Bernays set theory (except not necessarily with the global choice principle). I claim that the map $F\mapsto\varepsilon F$ is second-order definable in this context. Here we assume that we are given the class $F$ as a class only, without knowing the formula and parameter by which it was defined. Nevertheless, with a second-order assertion ranging over the definable classes, we shall be able to recover the defining formula and thereby recognize and provide the extension object $\varepsilon F$. We may assume that the meta-theoretic enumeration of formulas $\psi_0,\psi_1,\psi_2,\ldots$ that we used in the definition is also available internally to the object theory, arising as the standard part of a definable enumeration. Note that for standard $k$, there is a definable $\Sigma_k$-truth predicate. That is, we have first-order definable classes providing these partial truth predicates. Using these classes, we can express the relation $y=\varepsilon F$ as follows:
\begin{quote}
 $y$ is a pair $\<\gcode{\psi_n},u>$, where $\psi_n$ is a $\Sigma_k$ formula for some $k$, such that there is a class $T$ that is a $\Sigma_k$ truth predicate, $u$ is the set of minimal-rank parameters $p$ such that $\forall x\bigl(Fx\iff T(\gcode{\psi_n},\<x,p>)\bigr)$, and no preceding formula $\psi_i$ has this property.
\end{quote}
The key points are first, that we can express the truth of $\psi_n(x,p)$ using the $\Sigma_k$ truth predicate $T$, if $\psi_n$ is indeed a $\Sigma_k$ assertion, and second, because $F$ is actually definable, there will be a defining formula $\psi_n$ on the list of some standard complexity $\Sigma_k$, and so there will be such a truth predicate class $T$ as needed.
So we have proved (1c).

Finally, let me prove that this definition fulfills Basic law V. We had defined $\varepsilon F$ to be $\<\gcode{\psi},u>$ where $\psi$ is earliest formula to define $F$ and $u$ is set of minimal-rank parameters that do so. Thus, the extension object $\varepsilon F$ depends only on the class $F$ considered as a predicate, and not on the original presentation of $F$ as defined by a formula and parameter $\varphi(x,z)$. If we had used a different formula and parameter, but thereby defined the same class, then the extension object $\varepsilon F$ would still be picking out the least formula $\psi_n$ in the metatheoretic enumeration that is able to define that class, and $u$ would again be the set of minimal-rank parameters that work with this formula to define $F$. Therefore, different equivalent presentations of the definable class will get assigned the same object:
$$\varepsilon F=\varepsilon G\quad\text{ if and only if }\quad \forall x(Fx\iff Gx).$$
And this is precisely what it means for Basic Law~V to be fulfilled.
\end{proof}

Notice how the method exhibits Frege's distinction between identity of concepts and extensional equivalence of concepts. Namely, we can often describe a class in two distinct manners $\set{x\mid\varphi(x)}=\set{x\mid\psi(x)}$, that is, using different formulas $\varphi$ and $\psi$ or perhaps using different parameters, or the same formula with different parameters. So we have given different names to this class, by picking it out by means of different descriptions, and in this sense we have different concepts, but those concepts have the same extension in the case that $\forall x\ \bigl(\varphi(x)\iff\psi(x)\bigr)$, and in this case we shall have $\varepsilon\set{x\mid\varphi(x)}=\varepsilon\set{x\mid\psi(x)}$, or perhaps it is better to write it simply as $\varepsilon\varphi=\varepsilon\psi$.

I described in theorem~\ref{Theorem.Basic-law-V-in-ZF} an interpretation of extension objects for unary classes $\set{x\mid\varphi(x,z)}$ defined by a formula $\varphi(x,z)$ with one free variable $x$ ranging over the defined class and the other variable $z$ held constant as a parameter. In the most general case, however, one would naturally want to consider higher-arity formulas $\varphi(x_1,\ldots,x_k,z_1,\ldots,z_r)$ defining a $k$-ary relation in the first $k$ variables, using the other variables $z_i$ as constants or parameters in the definition. For this one could introduce a formalism $\varepsilon_{x_1,\ldots,x_n}\varphi$ to indicate exactly which variables are taken as part of the defined relation, with the others held as parameters in the definition (and Frege has an analogous notation for exactly this kind of case). Without saying much more about the details of such a system, it is clear that the argument of theorem~\ref{Theorem.Basic-law-V-in-ZF} works essentially just the same with this more general treatment.

What I claim about theorem~\ref{Theorem.Basic-law-V-in-ZF}, furthermore, is that the use of the extension operator $\varepsilon$ in the semantics that are provided is entirely eliminable---every assertion in the language of set theory expanded to allow the extension operator $\varepsilon\varphi$ and more generally the higher-arity versions $\varepsilon_{x_1,\ldots,x_n}\varphi$ is equivalent to an assertion in the base language of set theory alone.

\begin{corollary}\label{Corollary.Deflationary}
 In the semantics for extensions of definable classes $\varepsilon\varphi$ provided by theorem~\ref{Theorem.Basic-law-V-in-ZF}, including the higher-arity versions $\varepsilon_{x_1,\ldots,x_n}\varphi$, every use of the extension operator $\varepsilon$ is eliminable. That is, every assertion in the expanded language of set theory allowing the extension operator $\varepsilon$ is equivalent to an assertion in the language of set theory. Furthermore, one can find the $\varepsilon$-free translation by a primitive recursive translation function.
\end{corollary}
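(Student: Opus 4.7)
The plan is to eliminate $\varepsilon$ by a syntactic translation proceeding by induction on the number of $\varepsilon$-term occurrences. The key tool, already in hand, is Theorem~\ref{Theorem.Basic-law-V-in-ZF}(1a), in the higher-arity form discussed just before the corollary: it supplies a primitive recursive function $\varphi\mapsto\theta_\varphi$ that takes any $\varepsilon$-free formula $\varphi(x_1,\ldots,x_n,\vec z)$ (with the $x_i$ the abstracted variables and $\vec z$ the parameters) to a first-order set-theoretic formula $\theta_\varphi(y,\vec z)$ for which \ZF\ proves $\exists!\,y\,\theta_\varphi(y,\vec z)$ and $y=\varepsilon_{x_1,\ldots,x_n}\varphi\iff\theta_\varphi(y,\vec z)$. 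Inspection of the proof of Theorem~\ref{Theorem.Basic-law-V-in-ZF} confirms that $\theta_\varphi$ is obtained uniformly from $\varphi$ by locating $\varphi$'s position $n$ in the fixed metatheoretic enumeration and writing out the finite disjunction over $i\leq n$ described there, which is manifestly primitive recursive in $\gcode{\varphi}$.

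First I would make the expanded language precise: terms and formulas are defined by mutual recursion, with $\varepsilon_{x_1,\ldots,x_n}\varphi$ a term whenever $\varphi$ is a formula of the expanded language. The translation $\Phi\mapsto\Phi^*$ is then defined by recursion on the total count $N(\Phi)$ of $\varepsilon$ operators in $\Phi$. If $N(\Phi)=0$ then $\Phi^*=\Phi$. Otherwise, locate an \emph{innermost} $\varepsilon$-term $t=\varepsilon_{\vec x}\varphi(\vec x,\vec z)$, meaning one for which $\varphi$ itself contains no $\varepsilon$; such a $t$ exists since there are only finitely many nested $\varepsilon$'s in $\Phi$. Choose a fresh variable $y$ (not occurring free or bound in $\Phi$), and set
\[
  \Phi' \;\equiv\; \exists y\,\bigl(\theta_\varphi(y,\vec z)\,\wedge\,\Phi[t/y]\bigr),
\]
where $\Phi[t/y]$ replaces every occurrence of the term $t$ in $\Phi$ by $y$. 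Since $\theta_\varphi$ defines $y$ uniquely, $\Phi'$ is provably equivalent to $\Phi$ in \ZF. Because $\varphi$ was $\varepsilon$-free, $\theta_\varphi$ is $\varepsilon$-free, and the substitution eliminates every occurrence of $t$; hence $N(\Phi')<N(\Phi)$. Set $\Phi^*=(\Phi')^*$ and iterate. After at most $N(\Phi)$ steps the process terminates at a formula of the plain language of set theory, and the whole procedure is a composition of primitive recursive operations on codes of formulas.

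The main subtlety, and the place where care is required, is the interaction of the mutually recursive term/formula syntax with free variables and capture. When one identifies an innermost $\varepsilon$-term $t=\varepsilon_{\vec x}\varphi$, the parameters $\vec z$ must be taken to comprise \emph{all} free variables of $\varphi$ other than the abstracted $\vec x$, including those that are bound by quantifiers of $\Phi$ lying outside $t$. The freshness of $y$, together with a standard capture-avoiding substitution convention, then ensures that $\theta_\varphi(y,\vec z)\wedge\Phi[t/y]$ has exactly the right free and bound variables. Equivalence of $\Phi$ and $\Phi^*$ is proved by a routine induction on the number of elimination steps, invoking the uniqueness clause $\exists!\,y\,\theta_\varphi(y,\vec z)$ at each step; primitive recursiveness follows because each ingredient — locating an innermost $\varepsilon$, computing $\theta_\varphi$, performing the substitution, and counting $\varepsilon$-occurrences — is primitive recursive on Gödel codes.
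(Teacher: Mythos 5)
Your overall strategy is exactly the paper's: the published proof is a short sketch that says to proceed by induction on formulas, eliminating innermost occurrences $\varepsilon\varphi$ (those with $\varepsilon$-free $\varphi$) by means of the $\varepsilon$-free identity criteria of Theorem~\ref{Theorem.Basic-law-V-in-ZF}(1a) and (1b), and observing that the recursion is primitive recursive. Your write-up fills in that sketch, and the ingredients you cite are the right ones.

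There is, however, one step that fails as literally written. You form
$\Phi'\equiv\exists y\,\bigl(\theta_\varphi(y,\vec z)\wedge\Phi[t/y]\bigr)$
at the \emph{outermost} level, while acknowledging that the parameters $\vec z$ of the innermost term $t=\varepsilon_{\vec x}\varphi(\vec x,\vec z)$ may be variables bound by quantifiers of $\Phi$ lying outside $t$. In that case the occurrences of $\vec z$ in $\theta_\varphi(y,\vec z)$ sit \emph{outside} the scope of those quantifiers and become free, so $\Phi'$ is not equivalent to $\Phi$ (it need not even be a sentence when $\Phi$ is). Consider $\forall z\,\bigl(\varepsilon_x\varphi(x,z)\in w\bigr)$: your recipe yields $\exists y\,\bigl(\theta_\varphi(y,z)\wedge\forall z\,(y\in w)\bigr)$, with $z$ dangling. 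The freshness of $y$ is irrelevant here; the problem is not capture of $y$ but escape of $\vec z$. The standard repair is to perform the elimination locally, replacing each atomic subformula $A$ containing an occurrence of $t$ by $\exists y\,\bigl(\theta_\varphi(y,\vec z)\wedge A[t/y]\bigr)$, so that $\vec z$ remains in the scope of whatever binds it; this is precisely where clause (1b) of the theorem earns its keep, since it guarantees that $\theta_\varphi(y,\vec z)$ defines $\varepsilon F_{\vec z}$ uniformly as $\vec z$ varies, not merely for a fixed parameter value. (A related point: two occurrences of the same syntactic term $t$ may lie in the scopes of different binders of its parameter variables, so they should not all be funneled into a single top-level witness $y$ in any case.) With that correction the argument goes through and matches the paper's intended proof; the primitive recursiveness claim is unaffected.
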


\begin{proof}
One can prove this by induction on formulas, using statements (1a) and (1b) of theorem~\ref{Theorem.Basic-law-V-in-ZF}. One can systematically eliminate $\varepsilon\varphi$ from formulas $\varphi$ not involving $\varepsilon$. The point is statements (1a) and (1b) provide $\varepsilon$-free identity criterion for $y=\varepsilon\varphi$, and so any assertion made about this extension object can be made equivalently without need for any $\varepsilon$ expression. The same idea works in the higher-arity case $\varepsilon_{x_1,\ldots,x_n}\varphi$. The translation process is clearly primitive recursive, being defined by a syntactic recursion on formulas.
\end{proof}

Said differently, the claim is that Basic Law~V, expressed in the language expanding the language of set theory with the extension operator $\varepsilon\varphi$, is a conservative extension of Zermelo-Fraenkel set theory in the base language of set theory.

\section{Prior art}\label{Section.Prior-art}

Let me mention some relevant earlier work in the same direction as the project of this article. Terence Parsons~\cite{Parsons1987:On-the-consistency-of-first-order-Frege} showed the consistency of the first-order fragment of Frege's system by providing a model in which Basic Law~V is fulfilled for first-order definable classes. He begins with the observation that every countable model in a countable language admits, of course, only countably many definable classes. Given a countably infinite model, he divides the domain (externally to the model) into infinitely many infinite classes, reserving these objects to be assigned as extensions $\varepsilon F$ for the definable classes $F$ in a series of stages stratified by the $\varepsilon$ depth of the definitions. That is, at the first stage, using the first class of objects, he assigns extensions for the definable classes that are defined by formulas not involving extensions at all; at the next stage, using the next class of objects, he handles classes defined by formulas that make reference to the extensions, but only the extensions of extension-free formulas; and so on. The main point is that at each stage, all the relevant extension objects for a given formula to be successfully interpreted will have been already assigned at the earlier stages, and in this way he is able to assign all the extension objects $\varepsilon F$ in turn, realizing Basic Law~V.

John L. Bell~\cite{Bell1994:Fregean-extensions-of-first-order-theories} strengthens the Parsons result with a very clear argument, expanding the language Henkin-style with constants $c_A$ for each formula $A$, in such a way that $c_A=c_B\iff\forall x(Ax\iff Bx)$, which is to say that the objects $c_A$ serve as conceptually-invariant extensions for the classes $A$.

John Burgess~\cite{Burgess1998:On-a-consistent-subsystem-of-Freges-Grundgesetze} carries out the Parsons argument a little more explicitly, using compactness to achieve consistency in expanded language. Burgess~\cite[p.~89]{Burgess2005:Fixing-Frege} mounts a similar cardinality argument using \Lowenheim-Skolem to reduce from full second-order to Henkin semantics with countably many classes.

I greatly admire this earlier work, but what I should also like to mention about it is that none of these prior arguments provide a deflationary solution of the extension-assignment problem in the sense I have claimed for my solution in theorem~\ref{Theorem.Basic-law-V-in-ZF}. In the Parsons construction, for example, the labeling of concepts with objects is performed from outside the model, as though by the hand of God (or at least the hand of Parsons), and  similarly in effect for the Bell construction and the compactness arguments. These methods consequently do not provide a uniform definability of the identity criterion, as in (1a), (1b), and (1c) of theorem~\ref{Theorem.Basic-law-V-in-ZF}. In the case of Bell's construction, of course one gets the expressibility of $y=\varepsilon A$ in the expanded language simply by using the formula $y=c_A$, but in general there is no guarantee that the individual named by $c_A$ is definable in the original language (and indeed the model may have no definable elements), and the method does not achieve a uniform identity criterion with respect to parameters. That is, if one defines classes $A_i$ uniformly by a formula $A_i(x)\iff \varphi(x,i)$, then there will generally be no uniform way to express $y=c_{A_i}$ in the manner of (1b) of theorem~\ref{Theorem.Basic-law-V-in-ZF}, since each parameter $i$ leads to a syntactically different constant $c_{A_i}$, and these constants cannot all appear in one formula.

\section{The identity criterion and definability}

I find it frankly strange to neglect the identity criteria and definability questions arising with the extension-assignment problem, for it seems to me that having an expressible identity criterion should be taken as a core part of what it means to solve the extension-assignment problem. Not only must there be an assignment of extension objects $\varepsilon F$ to concepts $F$, but one should also be able to recognize which objects are extensions, which objects go with which concepts, which concepts go with which objects and so on.

In order actually to use the extensions $\varepsilon F$, after all, to form the extensions for our concepts at hand and to use them in our analysis, we would want to know not just that there are objects out there, somewhere, that have been assigned to concepts in a conceptually invariant manner, somehow, with nothing more to be said about which objects are used or how. Rather, we would want to know much more about it. Which objects get assigned as extensions for which concepts? How do we recognize for a given concept $F$ which object is its extension?
      $$y=\varepsilon F$$
How do we recognize for a given object $y$ for which concept $F$ it is the extension?
            $$y=\varepsilon F$$
How do we recognize whether a given object $y$ is the extension of any concept?
          $$\exists F\ y=\varepsilon F$$
What use will an extension assignment $F\mapsto\varepsilon F$ be, after all, if performed externally to the domain of discourse, if it is invisible within it? If the assignments are made Parsons-style from outside the model, as it were, then we cannot seem to work with the resulting solution in the original object theory. But what good is that? The only way to find out which object is the extension $\varepsilon F$ of an expressible concept $F$, in those systems, is to already know what it is.

I view the identity criterion problem for a given solution to the extension-assignment problem as akin to the famous Julius Ceasar problem. Namely, when Frege despairs at the impossibility of determining whether Julius Ceasar is a number, at bottom he is pointing out his lack of an identity criterion, his lack of a decision procedure for determining whether a given individual is a number. Similarly, a solution of the extension-assignment problem without a corresponding identity criterion will lack a decision procedure for determining whether a given object or individual is the extension of a concept.

My view is that the definability of the extension assignments amounts to having what I have referred to as a deflationary account of extensions. When we have an extension-assignment $F\mapsto\varepsilon F$ that admits an expressible identity criterion, one for which we achieve the definability properties mentioned in theorem~\ref{Theorem.Basic-law-V-in-ZF}, then we are in effect able to refer to the extensions of concepts in the original language and theory. Precisely because the features of the extension assignment are expressible in the base language, we have no need to expand the language to include direct reference to extensions. In short, to have a solution of extension-assignment problem that admits of definable identity criterion is exactly to be able to reduce the treatment of extensions to the original language and theory, to deflate the extensions to the original theory, as described in corollary \ref{Corollary.Deflationary}.

For this reason, I find the solution of the extension-assignments provided by my theorem~\ref{Theorem.Basic-law-V-in-ZF} to be an advance on the earlier work mentioned in section~\ref{Section.Prior-art}, at least for the context of set theory as the base theory. Not only can we have a solution of the extension-assignment problem realizing Basic Law~V in set theory, but we can do so in a deflationist manner, so that we can recognize in the object language which objects get assigned to which concepts.

Frege builds the expressivity of the extension assignment into his syntax for the expanded language with the extension operator $\varepsilon$, because he allows himself to form expressions such as $y=\varepsilon\varphi(x,p)$ for any formula $\varphi$, including any formula in the expanded language. This formal assertion expresses the relation of $y$ being the extension of the concept defined by $\varphi(x,p)$. But ultimately one must recognize that Frege provided no semantics for these expressions and did not seem to make any claims about whether his use of $\varepsilon$ should be eliminable.

To my way of thinking, merely having a syntactic expression $\varepsilon\varphi(x,p)$ serving as a name to refer to the extension object of a given concept is quite different from knowing which object it is or whether it is possible to have a semantics for these expressions having all the properties claimed of them. Ultimately, I shall argue, Frege's syntactic approach led him astray, because it was built into his language that he could express the concept ``object $x$ falls under the concept of which $y$ is the extension,'' and it is this, rather than Basic Law~V, that is the true downfall of his system, as I shall explain in theorem~\ref{Theorem.Russell-refutes-BLV}.

\section{Reconciling Russell}

Russell is commonly described as having refuted Frege's system and he is often described specifically as having shown Basic Law~V to be inconsistent. How are we to reconcile this with my claim in theorem~\ref{Theorem.Basic-law-V-in-ZF} that Basic Law~V is a provable theorem scheme of Zermelo-Fraenkel set theory? Wouldn't this show \ZF\ also to be inconsistent?

To be sure, Russell's argument is most naturally construed as a refutation of the general comprehension principle.

\theoremstyle{theorem}
\newtheorem*{GenComp}{General Comprehension Principle}
\begin{GenComp}
 For any property $\varphi$, one may form the set of all $x$ with property $\varphi(x)$. That is,
 $$\set{x\mid\varphi(x)}$$
 is a set.
\end{GenComp}
Russell's argument refutes even very simple instances of this, in the case of the formula $x\notin x$. Namely, by the general comprehension principle, we may form the set $R$ consisting of all sets $x$ that are not elements of themselves. $$R=\set{x\mid x\notin x}.$$
If this is indeed a set, then $R\in R$ if and only if $R\notin R$, which is a contradiction. Thus, the general comprehension principle has contradictory instances.

Although this is often how the Russell paradox is presented, the historical difficulty is that Frege does not state the general comprehension principle explicitly as part of his system. Rather, that principle is essentially hidden away into his manner of forming concepts and denoting them. In his letter replying to Russell, Frege blames Basic Law~V, yet also hints at a systemic problem:
\begin{quote}\label{Frege-quote}
It seems accordingly that the transformation of the generality of an identity into an identity of ranges of values (sect 9 of Gg) is not always permissible, that my law V (sec 20) is false, and that my explanations in sec 31 do not suffice to secure a meaning for my combinations of signs in all cases.

\hfill--Frege, 22 June 1902 letter to Russell,

\hfill responding to Russell's fateful letter
\end{quote}

Let me explain how Russell's argument can be used to refute Basic Law~V in combination with a specific comprehension-like principle.

\begin{theorem}[Russell]\label{Theorem.Russell-refutes-BLV}
Basic Law~V is inconsistent with the existence of the concept ``$x$ does not fall under any concept of which $x$ is the extension.''
\end{theorem}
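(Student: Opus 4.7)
The plan is a direct Russell-style diagonalization, in which Basic Law~V plays the role of transferring truth between coextensive concepts. Let $R$ denote the hypothesized concept, so that for every object $x$,
$$Rx \quad\Iff\quad \forall F\,\bigl(x = \varepsilon F \implies \neg Fx\bigr).$$
I would form the object $\varepsilon R$ and examine both possibilities for $R(\varepsilon R)$, aiming to derive a contradiction in each branch.

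Assume first that $R(\varepsilon R)$ holds. Instantiating the universal quantifier in the defining condition with $F = R$ and using the trivial identity $\varepsilon R = \varepsilon R$, we obtain $\neg R(\varepsilon R)$, a contradiction. For the other case, suppose $\neg R(\varepsilon R)$. Unpacking the defining condition of $R$, there must be some concept $F$ with $\varepsilon R = \varepsilon F$ and $F(\varepsilon R)$. Here Basic Law~V is invoked to pass from the identity of extensions $\varepsilon R = \varepsilon F$ to the extensional equivalence $\forall x\,(Rx \iff Fx)$, so in particular $R(\varepsilon R) \iff F(\varepsilon R)$. Since $F(\varepsilon R)$ holds, so does $R(\varepsilon R)$, contradicting our case assumption.

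Since both branches fail, the assumption that $R$ is a legitimate concept is incompatible with Basic Law~V, which is precisely the conclusion claimed. The proof itself is elementary; the only point worth flagging is that Basic Law~V enters in exactly one place, namely in converting $\varepsilon R = \varepsilon F$ into the pointwise biconditional $Rx \iff Fx$. This localization is not a technical obstacle but it is the conceptually important step, because it isolates where the paradox actually depends on Frege's law and thereby prepares the article's diagnosis: the true source of inconsistency is not Basic Law~V as such but rather the availability of the diagonal concept $R$, i.e.\ the assumed expressibility of the proto-truth-predicate ``$x$ falls under the concept of which $y$ is the extension.''
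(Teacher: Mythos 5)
Your proof is correct and follows essentially the same route as the paper's: form $r=\varepsilon R$, use Basic Law~V to see that any concept of which $r$ is the extension agrees extensionally with $R$, and conclude $R(r)\iff\neg R(r)$. Your explicit two-case split and your observation that only the forward direction of Basic Law~V is needed merely make precise what the paper's one-line proof compresses (and the paper itself notes the latter point in the discussion immediately following the theorem).
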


\begin{proof}
Assume Basic Law~V and let $R$ be the concept ``$x$ does not fall under any concept of which $x$ is the extension.'' If this is indeed a concept, then let $r=\varepsilon R$ be the extension of it, then by Basic Law~V we see that any concept of which $r$ is the extension will agree extensionally with $R$, and so $r$ falls under $R$ if and only if it does not, which is a contradiction.
\end{proof}

The main point here is that Russell's argument is not at its core a refutation of Basic Law~V as such, but rather a refutation of a certain concept-formation principle, an instance of class comprehension. It would suffice for Russell's argument if we had the \emph{falling-under} concept: object $x$ falls under the concepts of which $y$ is the extension. In this case, we could negate and project onto the diagonal to form the forbidden concept mentioned in theorem~\ref{Theorem.Russell-refutes-BLV}.

In fact, the argument does not require the full biconditional of Basic Law V, but only the forward implication, namely, that if $r$ is the extension of a concept, then that concept agrees with $R$. For example, it would suffice that every concept $F$ had a distinct extension $\varepsilon F$. This situation falls far short of Basic Law~V, and it is similar to the situation of every formula $\varphi$ having a distinct Gödel code $\gcode{\varphi}$. The point is that Russell's argument shows that even this weak form of the law is inconsistent with the existence of the concept ``$x$ falls under the concept of which $y$ is the extension.'' In this sense, Russell's argument has little to do with Basic Law~V as such.

I should like to emphasize that the falling-under concept amounts to a proto-truth predicate. That is, to say that object $x$ falls under the concept of which $y$ is the extension is exactly to say that that concept is true of $x$. The extension-assignments $\varepsilon F$ are a kind of Gödel-coding for predicates $F$, and the falling-under concept is the corresponding truth predicate.

The true mistake of Frege's system in my view is not Basic Law V as such, but rather the fact that Frege has built this proto truth predicate into his syntax. That is, the falling-under concept is easily expressed in Frege's system: $x$ falls under the concept of which $y$ is the extension is expressed simply as
  $$\exists F\, (y=\varepsilon F\text{ and }Fx).$$
Furthermore, Frege makes pervasive use of the falling-under concept. He uses falling-under even to define what it means to be a natural number: a natural number is any object that falls under every property that holds of $0$ and is passed from every object to its successor.

Perhaps Frege looked upon the idea of ``falling under'' as innocent, it being merely a special case of functional application, which Frege took as fundamental. That is, if one understands concepts as functionals mapping objects to \emph{true} or $\emph{false}$, then for an object $x$ to fall under a concept $F$ is simply to compute the value of $F(x)$. If this is \emph{true}, then $x$ falls under $F$, and if it is \emph{false}, then it does not.

To be sure, the difficulty of the falling-under relation is not when it is used in a particular case with a particular concept $F$, for one can express that $x$ falls under the concept $F$ simply by $Fx$. Rather, the source of all the trouble is the falling-under relation as a relation of objects to objects, taking the latter objects to represent concepts, as in the relation ``object $x$ falls under the concept of which $y$ is the extension.'' This form of the falling-under relation enables Russell's refutation as described in theorem~\ref{Theorem.Russell-refutes-BLV} and is what amounts to a truth predicate.

\section{Proving Tarski's theorem directly from Russell}

Let me explain next how this kind of Russellian reasoning can be used to provide a direct proof of Tarski's theorem on the nondefinability of truth. Since Gödel's incompleteness theorem can be seen as a consequence of Tarski's theorem, this argument therefore also provides a proof of Gödel's theorem directly from Russell. In short, I would like to explain how truly close Russell was to proving Gödel's theorem.

I find this remarkable, because contemporary accounts of Tarski's theorem usually invert this presentation, commonly proving Tarski's theorem only after Gödel's, and indeed often treating Tarski's theorem essentially as an afterthought to Gödel. A typical proof of Tarski's theorem proceeds by arguing that if there were a truth predicate, then by the Gödel fixed-point lemma there is a sentence asserting its own nontruth according to the predicate, and then by the Liar-paradox reasoning this sentence would be true if and only if the predicate declared it not to be true, contradicting what it means to be a truth predicate.


Here I shall instead prove Tarski's theorem directly by Russell's method, bypassing Gödel, with no explicit use of self-reference or the fixed-point lemma.

\begin{theorem}[Tarski]\label{Theorem.Tarski}
  In Zermelo-Fraenkel set theory, truth is not definable. That is, for any representation of formulas $\varphi$ with objects $\gcode{\varphi}$, there is no formula $T(y,x)$ in the first-order language of set theory such that
   $$\forall x\bigl[T(\gcode{\varphi},x)\iff\varphi(x)\bigr]$$
  for every formula $\varphi(x)$ in the first-order language of set theory.
\end{theorem}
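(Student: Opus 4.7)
The plan is to adapt the Russellian diagonal argument from Theorem~\ref{Theorem.Russell-refutes-BLV} directly to the truth-predicate setting. The hypothetical truth predicate $T(y,x)$ plays exactly the role of the falling-under relation: to say $T(\gcode{\varphi},x)$ is to say that $x$ falls under the concept represented (coded) by $\gcode{\varphi}$. Since this relation is by assumption expressible by a single formula in the language of set theory, I will be able to form the analog of Russell's forbidden concept, namely ``the formula coded by $x$ is not true of $x$''.

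Concretely, I would assume toward contradiction that some formula $T(y,x)$ of the first-order language of set theory fulfills $\forall x\,[T(\gcode{\varphi},x)\iff\varphi(x)]$ for every unary formula $\varphi(x)$. I would then let $\rho(x)$ be the formula $\neg T(x,x)$; this is a well-formed unary formula of the language precisely because $T$ is, and so it has a Gödel code $r=\gcode{\rho}$. Applying the truth schema to $\varphi=\rho$ at input $x=r$ yields $T(r,r) \iff \rho(r) \iff \neg T(r,r)$, which is the sought contradiction.

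The argument bypasses the Gödel fixed-point lemma entirely: no sentence asserting its own falsity is needed, because the diagonal term $T(x,x)$ appears already explicitly in $\rho$, just as the diagonal $x\in x$ appears already explicitly in Russell's definition of his paradoxical set. The only presupposition is that formulas admit a Gödel coding $\varphi\mapsto\gcode{\varphi}$ inside \ZF, which is standard arithmetization and in any case is built into the hypothesis of the theorem. The structural parallel with Theorem~\ref{Theorem.Russell-refutes-BLV} is thus exact: the formula $\neg T(x,x)$ corresponds to the Russell concept ``$x$ does not fall under the concept of which $x$ is the extension'', its code $r$ corresponds to the alleged extension $\varepsilon R$, and the contradictory biconditional $T(r,r)\iff\neg T(r,r)$ corresponds to $r\in R\iff r\notin R$. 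The main challenge is expository rather than mathematical, namely to emphasize for the reader that no fixed-point construction is required here, contrary to the usual post-Gödelian presentation of Tarski's theorem.
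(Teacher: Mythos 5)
Your proposal is correct and is essentially identical to the paper's own proof: the paper likewise defines $R(x)=\neg T(x,x)$, sets $r=\gcode{R}$, and derives $T(r,r)\iff\neg T(r,r)$ from the truth-schema instance at $\varphi=R$, $x=r$, with the same emphasis on the Russellian parallel and the absence of any fixed-point lemma. The only cosmetic difference is that the paper stresses that no arithmetization or expressibility of syntactic operations is needed (the substitution of $r$ into $R$ happens in the metatheory), whereas you frame the coding as standard arithmetization; this does not affect the argument.
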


In this set-theoretic context, we have available a variety of natural representations of formulas $\varphi$ with objects $\gcode{\varphi}$ in set theory. Of course there is the Gödelian arithmetization of syntax, taking $\gcode{\varphi}$ as the numerical Gödel code of the formula. But in this set-theoretic context we needn't rely on arithmetization specifically, since there are also softer coding methods available, simply using set theory as it usually is as a foundation of mathematics. That is, we can use the standard set-theoretic interpretations of sequences, functions, and so on, which of course provide set-theoretic interpretations of parse trees and the other syntactical constructions. In this account, $\gcode{\varphi}$ is simply the formula $\varphi$ as it is represented and interpreted in set theory. It is easy to see by metatheoretic induction on formulas that every formula has such a representation in set theory and that basic syntactical operations on formulas are expressible in set theory on those representations. So there are many robust methods to represent formulas $\varphi$ by objects $\gcode{\varphi}$.

I should like to emphasize, however, that the theorem as I have stated it does not require any elaborate development of syntax coding, and it is not about any specific coding of formulas into the object theory. Furthermore, the theorem makes no assumptions about the expressibility of syntactic operations in the codes. Rather, the theorem is a sweeping claim about all possible representations of formulas $\varphi$ with objects $\gcode{\varphi}$, namely, that for no such representation will truth be expressible.

Although I have stated the theorem for Zermelo-Fraenkel set theory \ZF, nevertheless the axiom of infinity is not needed at all, and one can formulate the result in (weak fragments of) finite set theory $\ZFfin$, which is bi-interpretable with Peano arithmetic \PA, and indeed in \PA\ itself, and in weak fragments of \PA. (In any case, the nondefinability of truth passes from any theory to all of its subtheories.) 

\begin{proof}
Suppose toward contradiction that there were such a truth predicate $T$ for some such representation of formulas $\varphi$ by objects $\gcode{\varphi}$. Using this predicate, let $R(x)=\neg T(x,x)$, which is the formula in a sense expressing, ``it is not the case that $x$ is a formula that is true of itself.'' Let $r=\gcode{R}$, and consider the sentence $R(r)$, which asserts $\neg T(r,r)$, which is $\neg T(\gcode{R},r)$, which by the truth-predicate requirement is equivalent to $\neg R(r)$, a contradiction.
\end{proof}

I find this argument to be a direct parallel of Russell's argument. The proof here is essentially identical to Russell's proof of theorem~\ref{Theorem.Russell-refutes-BLV}, simply using $\gcode{R}$ in place of $\varepsilon R$. The Russell formula $R(x)$ asserting that $x$ is not a formula that is true of itself is directly analogous to the Russellian concept of all concepts that do not apply to themselves, or the set of all sets $x$ that are not members of themselves.

A very similar manner of presenting Russell's argument as a proof of Tarski's theorem is also provided in~\cite{Fitting2017:Russells-paradox-Godels-theorem}, and accords with arguments of Smullyan, who had emphasized that much of the fascination surrounding Gödel's theorem may be more properly directed at Tarski's theorem.

Of course, one can find key elements of Gödel's argument here. The construction of the sentence $R(r)$, for example, where $R(x)=\neg T(x,x)$ and $r=\gcode{R(x)}$, is exactly what appears in the usual proof of Gödel's fixed-point lemma, and so the resulting sentence in this Russellian argument is the same as what one finds in the Gödelian fixed-point proof of Tarski's theorem. In that sense, the two approaches to proving Tarski's theorem are not ultimately very different. Nevertheless, the straightforward Russellian treatment above is simpler than the Gödelian proof and entirely avoids the issue of self-reference. In any case, I find it far more natural to view Gödel's fixed-point lemma as generalizing Russell's method here, rather than viewing Russell's argument as an application of Gödel's lemma. Surely Gödel found his proof of the fixed-point lemma by contemplating and generalizing Russell's argument.

I have stated Tarski's theorem in a two-dimensional version, making it in effect about the satisfaction relation rather than sentential truth, but let me mention that we can also formulate a one-dimensional purely sentential version of the theorem, which asserts: for any sufficient coding of syntax into the object theory, there is no formula $T(y)$ for which $T(\gcode{\sigma})\iff\sigma$ for every sentence $\sigma$. Proving this version of Tarski's theorem perhaps brings us closer to Gödelian reasoning, because with this version the substitution of the variable is performed in the object theory rather than in the metatheory as above, and for this we require that syntactic operations such as substitution of terms are expressible in the codes. To prove the sentential formulation, suppose there were such a sentential truth predicate $T$. We can then form the Russell formula $R(x)$ asserting ``$x$ is the code of a formula $\varphi$ that is false at the point $x$.'' To express this version of $R(x)$ from the sentential truth predicate, we form the sentence that would express the formula coded by $x$ at the object $x$ itself, by creating a sentence describing the object $x$ (and this uniform definability is part of our requirement on the coding) and then applying the predicate $T$ to that sentence resulting by substituting that object into the formula coded by $x$. Now we let $r=\gcode{R}$, and consider the sentence $R(r)$, which by the definition of $R$ asserts that the formula coded by $r$ is false at the point $r$, which is to say $\neg T(\gcode{R(r)})$, which would by the truth-predicate property be equivalent to $\neg R(r)$, a contradiction.

Some people point out that Tarski's theorem requires the use of Gödel codes $\gcode{\varphi}$ and the arithmetization of syntax, even for the statement of the theorem, and arithmetization is rightly considered a core contribution of Gödel. I am in complete agreement on the profound nature of Gödel's contribution regarding arithmetization; see my remarks at~\cite[p.~233]{Hamkins2021:Lectures-on-the-philosophy-of-mathematics}. A rebuttal to the specific point here, however, is that Tarski's theorem requires far less coding than Gödel's. As I mentioned earlier, the set-theoretic version of Tarski's theorem has no need for coding specifically into arithmetic, but can use set theory as usual as a foundation of mathematics, a usage widely recognized by Hilbert and others before Gödel. Tarski's theorem also has no need for an arithmetization of deduction and the formal proof system, a key technical burden of Gödel's theorem. Furthermore, for the two-dimensional version of Tarski's theorem, as I discussed earlier, we do not even require effectivity or expressibility of the syntactic operations in the coding (this is why my proof of theorem \ref{Theorem.Tarski} involves considerably less coding than appears in \cite{Fitting2017:Russells-paradox-Godels-theorem}, for example, which aims at the sentential version of Tarski's theorem). Rather, Tarski's theorem in this form seems to require only a rudimentary representation of formulas from the metatheory in the object theory.

To my way of thinking, the idea of having an internal representation of formulas in the object theory can be traced to Frege himself. Namely, there is an analogy between the extension $\varepsilon\varphi$ of a concept $\varphi$ and the Gödel code $\gcode{\varphi}$ of a formula, for in both cases we have an internal object-theory representation of a formula or concept from the metatheory. And as I have explained, this is already enough to prove Tarski's theorem as in theorem \ref{Theorem.Tarski}, without any additional involvement of Basic Law V, that these representations respect logical equivalence.

Let me offer a slogan form of Tarski's theorem in a Fregean-style language:
\begin{quote}
  ``Truth is not a concept.''
\end{quote}
Just as Russell's argument showed in theorem~\ref{Theorem.Russell-refutes-BLV} a sense in which the falling-under relation is not a concept, similarly the Russellian proof of theorem~\ref{Theorem.Tarski} shows that truth is not expressible. The core of the argument in both cases is about our lack of uniformity in expressing these notions. That is, in the context of a given instance, we can easily express that a given object $x$ falls under a given concept $F$ by asserting~$Fx$. What we cannot do is express in a uniform manner that ``object $x$ falls under the concept of which $y$ is the extension.'' Similarly, with Tarski's theorem, we can express that a particular formula $\varphi$ holds at a point $x$ by asserting $\varphi(x)$; what we cannot do is express in a uniform manner that ``$y$ represents a formula that is true at $x$.''

Now that we have proved Tarski's theorem directly via Russell, let me deduce Gödel's incompleteness theorem as a consequence. Namely, since truth is not definable, but provability is definable (and for this one much implement the proof system in the object theory), they cannot be the same thing. QED.

In this way, one realizes the power of Tarski's theorem over Gödel's. Whereas Gödel is showing that truth is not the same as provability, a $\Sigma_1$ notion, Tarski shows that truth is not $\Sigma_n$ expressible for any $n$.


\section{The Cantor-Hume principle}\label{Section.Cantor-Hume}

Frege had used his extension objects $\varepsilon F$ and Basic Law~V in order to prove other abstraction principles, including especially the Cantor-Hume principle, upon which he founded his arithmetic, which in many respects was the focal effort of his project. What I would like to do is explain how the ideas of theorem~\ref{Theorem.Basic-law-V-in-ZF} can be used also to fulfill these other Fregean abstraction principles in Zermelo-Fraenkel set theory.

Let me begin with the central instance, the Cantor-Hume principle, also known simply as Hume's principle, which is the Fregean abstraction principle concerning numbers as a classification invariant of the equinumerosity relation. Specifically, one class $X$ is \emph{equinumerous} with another $Y$, written $X\equinumerous Y$, if there is a one-to-one correspondence between them. The \emph{cardinal-assignment problem}, which we might also call the \emph{number-assignment problem}, is the problem of assigning a cardinal or number object $\num X$ to every class $X$, in such a way that it is an equinumerosity invariant, so that equinumerous classes get assigned the same number and nonequinumerous classes get assigned different numbers. The Cantor-Hume principle is the assertion that indeed there is a solution to this problem.

\newtheorem*{cantorhume}{Cantor-Hume principle}
\begin{cantorhume}
 There is a number-assignment scheme, assigning to every class $X$ a number object $\num X$, for which two classes get assigned the same number if and only if they are equinumerous:
 $$\num X=\num Y\qquad\text{ if and only if }\qquad X\equinumerous Y.$$
\end{cantorhume}

Let me begin by mentioning several easy solutions to the cardinal-assignment problem that are available in Zermelo-Fraenkel set theory for special instances of the principle.

Consider, for example, the set-only version of the cardinal-assignment problem. This is completely solved in \ZFC\ using the axiom of choice by the standard treatment of cardinalities in \ZFC. Namely, in \ZFC\ every set $x$ can be well-ordered and is therefore equinumerous with some smallest ordinal, and so we may simply define the cardinality $|x|$ of the set $x$ to be the smallest ordinal with which it is equinumerous. This not only solves the cardinal-assignment problem, but does so in a way that is simultaneously a solution of the \emph{cardinal-selection problem}, the problem of choosing a unique representative from each equinumerosity class. That is, for any set $x$, the cardinality of $x$ defined as the smallest ordinal equinumerous with $x$ is not only an equinumerousity invariant, but it is one of the sets that is itself equinumerous with $x$---this \ZFC\ definition provides a definable way to pick a canonical member in every equinumerosity class of sets.

This same solution works to an extent in \ZF, without the axiom of choice, at least for the well orderable sets---for every such set $x$ we can define the cardinality $|x|$ to be the smallest ordinal equinumerous with it.

But in fact, we do not the axiom of choice at all to solve the cardinal-assignment problem for sets.

\begin{theorem}
 In Zermelo-Fraenkel set theory \ZF\ without the axiom of choice, there is a definable solution of the cardinal-assignment problem for sets, thereby verifying the Cantor-Hume principle for sets. That is, there is a definable assignment
  $$x\mapsto \num x$$
 assigning to each set $x$ a set $\num x$, in such a way that provides a classification invariant for equinumerosity:
  $$\num x=\num y\quad\text{ if and only if }\quad x\equinumerous y.$$
\end{theorem}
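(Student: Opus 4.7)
The plan is to use Scott's trick, the standard device for mimicking choice-of-representative in $\ZF$ without appealing to the axiom of choice. Given a set $x$, the class of all sets equinumerous with $x$ is in general a proper class (for instance, for any nonempty $x$ there are equinumerous copies at arbitrarily high rank, obtained by replacing one element with a set of large rank). We cut this class down to a set by restricting to the sets of least rank.

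More precisely, I would define
 $$\num x\ =\ \set{\, y\mid y\equinumerous x \text{ and }\rank(y)\text{ is minimal among sets equinumerous with }x\,}.$$
First I would verify that this is a set rather than a proper class: the class of ranks of sets equinumerous with $x$ is a nonempty class of ordinals, so by Foundation it has a least element $\alpha_x$, and then $\num x\of V_{\alpha_x+1}$, which is a set. The assignment $x\mapsto\num x$ is manifestly first-order definable from $x$, with no parameters and no appeal to choice, using only the rank function and the notion of a bijection, both of which are available in $\ZF$.

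Next I would verify the Cantor-Hume biconditional. If $x\equinumerous y$, then $\set{z\mid z\equinumerous x}=\set{z\mid z\equinumerous y}$ by transitivity and symmetry of equinumerosity, so the two classes have the same minimal rank and the same minimal-rank members, giving $\num x=\num y$. Conversely, if $\num x=\num y$, this common value is nonempty (it contains any minimal-rank representative, which exists by Foundation applied to the nonempty class of ranks of equinumerous sets; $x$ itself witnesses nonemptiness of that class), so pick any $z\in\num x=\num y$; then $z\equinumerous x$ and $z\equinumerous y$, hence $x\equinumerous y$.

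There is essentially no obstacle here; the only subtlety worth flagging is the one Scott's trick was invented to circumvent, namely that without choice we cannot in general select a single canonical representative from each equinumerosity class (there may be no well-ordering of the set of all sets of rank $\alpha_x$ equinumerous with $x$), so $\num x$ genuinely needs to be a set of representatives rather than a single one. This is exactly the distinction, mentioned in the text, between solving the cardinal-assignment problem and solving the cardinal-selection problem: Scott's trick solves the former in $\ZF$ but not the latter. Finally, I would remark that the same construction, read with $x$ replaced by a definable class $X$ presented intentionally by a formula and parameters, gives the uniform version that will be needed in the general Cantor-Hume treatment of the next section, along the lines of statements (1a)--(1c) of Theorem~\ref{Theorem.Basic-law-V-in-ZF}.
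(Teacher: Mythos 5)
Your proposal is correct and is essentially identical to the paper's own proof: both define $\num x$ as the set of rank-minimal members of the equinumerosity class of $x$ via Scott's trick and verify the biconditional exactly as you do. Your write-up simply supplies more detail (the sethood check via $V_{\alpha_x+1}$ and the explicit nonemptiness argument) than the paper's terser version.
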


The theorem has become a folklore result in set theory, but it seems that Gödel had it by 1951, see~\cite[footnote to Dfn~6.2, added 1951]{Godel1948:Consistency-of-AC-and-GCH-with-axioms-of-set-theory}.
The main idea has become known as Scott's trick, in light of Dana Scott's discovery of it at around that same time, while an undergraduate student of Tarski's in Berkeley, but it would seem reasonable to me for it to be named the Gödel-Scott trick.\footnote{I corresponded with Dana Scott about his trick, and he explained that Tarski put to the class the problem of finding a set-sized invariant for a definable equivalence relation on the set-theoretic universe. Scott's minimal-rank idea solved it, and Tarski was taken with Scott's solution, dubbing it ``Scott's trick,'' but Scott himself was sheepish about the honor, emphasizing to me his desire to be remembered for other mathematical achievements, several of which he itemized to me.} Scott's trick resolves an important issue in the definition of ultrapowers of the set-theoretic universe, and thus continually arises and rearises in large cardinal set theory, which is preoccupied with such ultrapowers as a central method, and it also arises similarly when defining the Boolean ultrapower of the universe by a forcing notion, thus becoming a key technique in the diverse mutual interpretability results provided by forcing.

The essence of Scott's trick is to handle a proper-class sized equivalence class for an equivalence relation defined on a proper class not by picking representatives of each class, which might not be possible without the global choice principle, but rather by considering the set of rank-minimal elements in each class, which will form a set and which will canonically represent that equivalence class.


\begin{proof}
The key idea is to drop the cardinal-selection aspect of the minimal-ordinal solution in \ZFC, and strive instead merely for an equinumerosity invariant as required by the Cantor-Hume principle. For this, we can for any set $x$ define $\num x$ to be the set of rank-minimal sets in the equinumerosity class of $x$. This is simply to use Scott's trick to select a subset of the equiumerosity class of the set. The set of rank-minimal members of the equinumerosity class determines the equinumerosity class and is invariant with respect to equinumerosity, and so it fulfills the Cantor-Hume principle for sets.
\end{proof}

That seems to polish off the set version of the cardinal-assignment problem. But the Cantor-Hume principle, of course, is not just about assigning cardinals to sets, but requires us also to assign numbers to classes, and so we have not yet achieved a proof of the full Cantor-Hume principle in \ZF.

To find a solution for the class case, let's observe first that if the global choice principle happens to hold, for example, if $V=\HOD$, then we can define a well-ordering of the universe in order type $\Ord$, and from this it follows that all proper classes are equinumerous. In this event, therefore, there will be only one additional equinumerosity class to be handled, the proper class equinumerosity class, and so we can simply assign $\num X$ either to be the smallest ordinal equinumerous with $X$, if $X$ is a set, or some default value $\infty$ that is not an ordinal, if $X$ is a proper class. This will solve the cardinal-assignment problem under global choice, fulfilling the Cantor-Hume principle.

If the global choice principle fails, however, then not all proper classes are equinumerous, even in \ZFC. For example, the whole universe $V$ will not be equinumerous with the class of ordinals, since such a bijection would imply global choice.

So let me now explain a completely general solution to the number-assignment problem, for both sets and classes, one which fulfills the Cantor-Hume principle in \ZF\ without using any choice principle at all. The main difficulty here is that we may have many different proper classes, not equinumerous, but we need to associate to each of them a cardinal number in such a way that fulfills the Cantor-Hume principle.

\begin{theorem}\label{Theorem.CantorHume}
In Zermelo-Fraenkel set theory \ZF\ with definable classes only, there is a number assignment scheme $F\mapsto\#F$, assigning to every definable class $F$ a set object $\#F$, called the number of the class $F$, with the following properties:
\begin{enumerate}
  \item For any particular definable class $F=\set{x\mid\varphi(x,z)}$, the identity criterion for being the number $\#F$ is expressible by a formula $\Theta$ in the second-order language of set theory, using the same set parameter $z$, if any, appearing in the presentation of $F$:
   $$y=\#F\quad\text{ if and only if }\quad \Theta(y,z).$$
  \item The identity criterion of whether a given object $x$ is a number is expressible by a formula $\Lambda(x)$ in the second-order language of set theory.
  \item The numbers $\#F$ are well-defined with respect to the equinumerosity equivalence of the classes, in the sense that
       $$\#F=\#G\quad\text{ if and only if }\quad F\text{ is equinumerous with }G.$$
  In other words, the Cantor-Hume principle holds for these number assignments.
\end{enumerate}
\end{theorem}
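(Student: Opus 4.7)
The plan is to imitate the construction of Theorem~\ref{Theorem.Basic-law-V-in-ZF}, but now organizing definable classes by equinumerosity rather than by extensional equivalence. Fix a metatheoretic enumeration $\psi_0,\psi_1,\psi_2,\ldots$ of the formulas of the first-order language of set theory. Given a definable class $F = \set{x \mid \varphi(x,z)}$, let $\psi_n$ be the earliest formula in this enumeration for which there exists some parameter $p$ such that the class $\set{x \mid \psi_n(x,p)}$ is equinumerous with $F$; such an $n$ exists because $\varphi$ itself witnesses this with $p = z$. Among the witnessing parameters for $\psi_n$, let $u$ be the set of those of minimal set-theoretic rank, which is a set by Scott's trick. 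Define $\#F = \<\gcode{\psi_n}, u>$.

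Verification of the invariance statement (3) follows the pattern of Theorem~\ref{Theorem.Basic-law-V-in-ZF}: the defining data $\<\gcode{\psi_n},u>$ depends only on the equinumerosity class of $F$, since the search condition ``$\set{x \mid \psi_k(x,p)} \equinumerous F$'' is invariant under replacing $F$ by any equinumerous class. Hence $F \equinumerous G$ yields $\#F = \#G$ immediately. Conversely, if $\#F = \#G = \<\gcode{\psi_n}, u>$, then any $p \in u$ witnesses that both $F$ and $G$ are equinumerous with $\set{x \mid \psi_n(x,p)}$, hence with one another.

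For the definability assertions (1) and (2), the essential new feature distinguishing this construction from the Basic Law~V case is that equinumerosity of classes requires the existence of a bijection, which for proper classes is itself a proper class. Thus the search clause ``there exists $p$ with $\set{x \mid \psi_n(x,p)} \equinumerous F$'' involves a second-order existential over a bijection class. The identity criterion $y = \#F$ then says that $y = \<\gcode{\psi_n},u>$ where $\psi_n$ is the first formula in the metatheoretically finite initial segment admitting such a witness bijection and $u$ is exactly the set of minimal-rank parameters for which such a bijection exists---a statement expressible in the second-order language of set theory using the same parameter $z$ that appears in $\varphi$, establishing (1). For (2), being a number means being a pair $\<\gcode{\psi},u>$ that arises from the scheme, which is again expressible second-order by quantifying over possible defining formulas and witness bijections.

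The main obstacle I expect is keeping the second-order/first-order bookkeeping straight and ensuring that the Scott's-trick step goes through when equinumerosity is adjudicated by class bijections. The collection of all parameters $p$ with $\set{x \mid \psi_n(x,p)} \equinumerous F$ may well be a proper class, but restricting to those of minimal rank confines the survivors to a fixed $V_\alpha$ and yields a bona fide set $u$. One should also clarify whether equinumerosity is adjudicated by the full second-order class universe or only by the definable classes, since this affects which classes count as equinumerous; the construction works uniformly for either reading, provided the second-order quantifier appearing in (1) and (2) ranges over the same collection of classes used to adjudicate equinumerosity.
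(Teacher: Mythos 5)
Your proposal is correct and follows essentially the same route as the paper: the same pair $\<\gcode{\psi_n},u>$ construction with the search reorganized by equinumerosity, the same Scott's-trick step for $u$, and the same observation that the identity criterion becomes second-order because equinumerosity of proper classes is adjudicated by a (definable) class bijection---the paper makes this last point concrete by quantifying over partial truth predicates to express ``definably equinumerous,'' which resolves the ambiguity you flag in your final paragraph in favor of the definable-classes reading.
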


\begin{proof}
The main difficulty, of course, is to describe how we are to assign numbers to each class. We shall provide a general procedure that works for any definable class $F$. Namely, suppose we are faced with a particular definable class $F=\set{x\mid\varphi(x,z)}$, named intentionally by the formula $\varphi$ with parameter $z$. Let us define $\#F$ to be the pair $\<\gcode{\psi},u>$, where $\psi$ is the first formula on the enumeration of all formulas for which there is some $z$ such that $\set{x\mid\psi(x,z)}$ is equinumerous with $F$, and $u$ is the set of all minimal-rank such sets $z$ for which this is true.

Let's verify statement (1) of the theorem. Given any particular definable class $F=\set{x\mid\varphi(x,z)}$, let $\Theta(y,z)$ assert that $y$ is a pair $\<\gcode{\psi},u>$ whose first coordinate is the code of a formula $\psi$ and whose second coordinate is the set of minimal-rank parameters $p$ for which $\set{x\mid \psi(x,p)}$ is a class that is equinumerous with $F$ by a definable bijection, and such that no earlier formula $\psi_i$ preceding $\psi$ on the list can define with some parameter such a class equinumerous with $F$. This assertion is expressible in second-order set theory, in the context where the classes are all the first-order definable classes, because we can quantify over the partial truth predicates and thereby refer to the existence of a definable bijection between the classes.

For statement (2), the question of whether a given object $x$ is the number $\num F$ of some definable class $F$ is simply the question of whether $x$ is a pair $\<\gcode{\psi},u>$ whose first coordinate is the code of a formula for which there is a truth predicate available and whose second coordinate is the set of minimal-rank parameters all defining via $\psi$ classes that are definably equinumerous with one another, such that no earlier formula works in this way.

For statement (3), the number assignment $\num F$ clearly determines the equinumerosity class of $F$ and depends only on that class and not on the particular presentation of $F$. So it will fulfill the full Cantor-Hume principle as desired.
\end{proof}

One can understand statement (2) as an answer of sorts to the Julius Caesar problem, for it provides a criterion deciding whether a given object $x$ is a number on this account of number. Of course, Frege was concerned with the Julius Caesar problem in so far as that the Cantor-Hume principle, by itself, did not seem automatically to provide an identity criterion for what is a number. Here, in contrast, statement (2) is providing such an identity criterion for the specific number-assignment defined in the proof of this theorem; there is no claim, of course, that this is the only solution to the number-assignment problem and other assignments will naturally have a different identity criterion.

Perhaps one objects to this theorem in comparison with the realization of Basic Law~V in theorem~\ref{Theorem.Basic-law-V-in-ZF} because here we have only second-order definitions for the number identity criterion. But to my way of thinking, this objection must retreat in the face of the fact that the equinumerousity relation itself is a second-order relation. That is, the invariance property of Basic Law~V is the first-order property of logical equivalence of two classes, $\forall x\,[Fx\iff Gx]$, whereas with the Cantor-Hume principle the equivalence relation of equinumerosity $F\equinumerous G$ is itself asking for a definable bijection between $F$ and $G$, a second-order quantification. So it is hardly surprising that the identity criterion is also second-order expressible.

\section{Fregean abstraction in \ZF}\label{Section.Fregean-abstraction}

Frege emphasized the process by which we abstract from a notion of equivalence to the abstraction of that equivalence. We have a notion of parallel lines, for example, which all have the same direction, but what is the \emph{direction} of a line? According to Frege, it is an abstraction of parallelism. The direction of a line is an invariant of the parallelism relation---parallel lines have the same direction and nonparallel lines have different directions. Similarly, we have the notion of concepts being logically equivalent, and the \emph{extension} of a concept is the corresponding abstraction---according to Basic Law~V, logically equivalent concepts have the same extension and logically inequivalent concepts have different extensions. We have the notion of classes being equinumerous, and the \emph{number} of a class is the abstraction---according to the Cantor-Hume principle, equinumerous classes have the same number of elements and nonequinumerous classes have a different number of elements.

Thus Fregean abstraction generalizes Basic Law~V and the Cantor-Hume principle. In the general case, we have an equivalence relation $x\sim y$, and we seek a system of abstractions $x\mapsto\alpha x$ with the property that equivalent instances get assigned the same abstraction and inequivalent instances get assigned different abstractions.

\newtheorem*{FregeanAbs}{Fregean abstraction principle}
\begin{FregeanAbs}
 For any given equivalence relation $x\sim y$, the Fregean abstraction for this relation is a system of abstractions
  $$x\mapsto \alpha x$$
 which respect the equivalence relation:
  $$\alpha x=\alpha y\quad\text{ if and only if }\quad x\sim y.$$
\end{FregeanAbs}

In short, Fregean abstraction for an equivalence relation $\sim$ provides a classification invariant for that relation, an assignment $x\mapsto \alpha x$ that is constant on each equivalence class and with different values on different equivalence classes.

In this section, I should like to investigate how well we might achieve the Fregean abstraction principle in Zermelo-Fraenkel set theory. Let me start as I had earlier with easier special instances of the principle. Let us begin with the easy case of a first-order definable equivalence relation $x\sim y$ on the sets.

\begin{theorem}\label{Theorem.Fregean-abstraction-sets}
 In Zermelo-Fraenkel set theory \ZF, suppose that $x\sim y$ is a first-order definable equivalence relation defined on a class of sets. Then there is definable system of abstractions, $x\mapsto \alpha x$, associating to each set $x$ in the domain an object $\alpha x$ called the abstraction of $x$, which form a classification invariant for $\sim$ and thereby fulfill the Fregean abstraction principle:
  $$\alpha x=\alpha y\quad\text{ if and only if }\quad x\sim y$$
\end{theorem}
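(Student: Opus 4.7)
The plan is to prove this theorem by the direct application of Scott's trick, which has already been exhibited earlier in the article for the equinumerosity relation on sets. Given a first-order definable equivalence relation $x\sim y$ on a class of sets, defined by a formula $\varphi(x,y,z)$ with parameter $z$, I will define
$$\alpha x \;=\; \set{\,u\mid u\sim x \text{ and } \rank(u)\text{ is minimal among ranks of sets $\sim$-equivalent to }x\,}.$$
That is, $\alpha x$ is the set of rank-minimal representatives of the $\sim$-equivalence class of $x$.

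First I would verify that $\alpha x$ is actually a set, not a proper class. The key point is that the collection of all sets of any fixed rank $\beta$ is the set $V_\beta$, and so the rank-minimal $\sim$-equivalents of $x$ all lie inside some single $V_\beta$ and can be collected by separation. This uses only that $\sim$ is first-order definable with parameter $z$, so the defining formula $\theta(u,x,z)$ for $u\in\alpha x$ is
$$u\sim x \;\wedge\; \forall v\bigl(v\sim x \implies \rank(u)\leq\rank(v)\bigr),$$
which is a legitimate first-order formula of set theory with parameters $x$ and $z$. Thus $x\mapsto\alpha x$ is a definable class function with the same parameter used for $\sim$.

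Next I would verify the classification-invariant property. For the forward direction, if $x\sim y$ then the equivalence class of $x$ equals the equivalence class of $y$, so their rank-minimal parts are literally the same set, giving $\alpha x=\alpha y$. For the converse, if $\alpha x=\alpha y$, pick any $u\in\alpha x=\alpha y$; such a $u$ exists because the equivalence class of $x$ is a nonempty subclass of $\ORD$-ranks and so has a least rank populated by at least one witness. Then $u\sim x$ and $u\sim y$, whence by symmetry and transitivity of $\sim$ we conclude $x\sim y$. This establishes the biconditional required by the Fregean abstraction principle for $\sim$.

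There is no real obstacle here: the whole argument is Scott's trick in its most straightforward form, and the only small observation to verify is the nonemptiness of $\alpha x$ whenever $x$ lies in the domain of $\sim$, which follows immediately from $x\sim x$ and the well-foundedness of the rank hierarchy. The theorem is thus essentially a direct generalization of the cardinal-assignment result for sets proved earlier in the paper, with equinumerosity replaced by an arbitrary definable equivalence relation.
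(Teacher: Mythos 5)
Your proposal is correct and follows essentially the same route as the paper's own proof: both apply Scott's trick directly, defining $\alpha x$ as the set of rank-minimal members of the $\sim$-equivalence class of $x$ and checking the biconditional. Your added verifications (that $\alpha x$ is a set via separation inside some $V_\beta$, and that it is nonempty by reflexivity and well-foundedness of rank) are sound details the paper leaves implicit.
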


\begin{proof}
In this easy case, the equivalence relation lives on sets only, and so we can simply employ Scott's trick. Namely, we define the abstraction $\alpha x$ of any object $x$ in the relevant class to be the set of rank-minimal members of the $\sim$-equivalence class of $x$. This is an invariant for the equivalence relation, because if $x\sim y$, then they have the same equivalence class and consequently the same set of rank-minimal members of that class, and so $\alpha x=\alpha y$. And conversely, the set of rank-minimal members of an equivalence class determines that class, and so if $\alpha x=\alpha y$, then indeed $x\sim y$. So this is a full solution of Fregean abstraction in \ZF\ for definable class equivalence relations on the sets.
\end{proof}

This theorem covers the case of parallel lines, if one regards the lines as interpreted in set theory, and many further cases of that nature, including the case of equinumerosity for sets. In particular, this simple theorem covers the case of equinumerosity for finite classes, since these must be sets, and as a consequence this theorem provides a sufficient level of Fregean abstraction to undertake much of Frege's development of natural number arithmetic, which requires only sets rather than classes, since every finite class is a set.

But of course, several of Frege's other principal applications of Fregean abstraction arise not from equivalence relations on sets, but on classes. This is a more difficult realm, in which Scott's trick is not necessarily applicable. So let me make the step up to classes, treating first the case of a first-order expressible equivalence relation on definable classes: $$F\approx G.$$
The instances $F$ and $G$ here will be definable classes in \ZF\ set theory, and the notion of equivalence $F\approx G$ will be expressible by a formula $\varphi(F,G)$ involving only first-order quantification, in which those classes $F$ and $G$ appear as predicates. This is the kind of equivalence arising in Basic Law~V, for example, because to express the course-of-values equivalence for concepts $F$ and $G$, which is to say, that they are logically equivalent, is to say $\forall x\,(Fx\iff Gx)$, which is therefore first-order expressible in terms of $F$ and $G$.

\begin{theorem}\label{Theorem.Fregean-abstraction-classes-first-order}
 In Zermelo-Fraenkel set theory, suppose that $\approx$ is a first-order definable equivalence relation on definable classes. Then there is a system of abstractions $F\mapsto\alpha F$, associating to each definable class $F$ an abstraction object $\alpha F$, which fulfill the Fregean abstraction principle:
 $$\alpha F=\alpha G\quad\text{ if and only if }\quad F\approx G.$$
 Furthermore, for any particular definable class $F=\set{x\mid\varphi(x,z)}$, the identity criterion $y=\alpha F$ is first-order expressible in set theory, using only the same parameter $z$, if any, appearing in the presentation of $F$:
 $$ \forall y\ \bigl( y=\alpha F\quad\iff\quad \theta(y,z)\bigr).$$
 If $I$ is any class and $i\mapsto F_i$ is a definable sequence of classes, then the identity criterion $y=\alpha F_i$ is first-order expressible in set theory, using the same parameter used to define the sequence:
 $$\forall y\forall i\ \bigl[ y=\alpha F_i\quad\iff\quad\theta(y,i,z)\bigr]$$
\end{theorem}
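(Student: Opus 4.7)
The plan is to mimic the proof of Theorem~\ref{Theorem.Basic-law-V-in-ZF} essentially verbatim, replacing the role of logical equivalence of concepts with the given equivalence relation $\approx$. Fix once and for all a metatheoretic enumeration $\psi_0,\psi_1,\psi_2,\ldots$ of the formulas in the first-order language of set theory. Given a definable class $F=\set{x\mid\varphi(x,z)}$, presented intentionally by the formula $\varphi$ and parameter $z$, I would define
$$\alpha F=\<\gcode{\psi_n},u>,$$
where $\psi_n$ is the earliest formula on the enumeration for which there is \emph{some} parameter $p$ such that $\set{x\mid\psi_n(x,p)}\approx F$, and $u$ is the set of all such $p$ of minimal set-theoretic rank. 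This definition depends only on the $\approx$-class of $F$, so invariance in the biconditional form of the Fregean abstraction principle follows exactly as in the Basic Law~V case.

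To establish first-order expressibility of the identity criterion $y=\alpha F$ for a particular $F=\set{x\mid\varphi(x,z)}$, I would take a finite disjunction over $i\leq n$, where $n$ is the index of $\varphi$ in the enumeration, of assertions saying that $y$ is a pair $\<\gcode{\psi_i},u>$ whose second coordinate $u$ is the set of minimal-rank parameters $p$ making $\set{x\mid\psi_i(x,p)}\approx\set{x\mid\varphi(x,z)}$, and that no earlier $\psi_j$ admits such a parameter. The crucial point that makes this work is precisely the hypothesis that $\approx$ is \emph{first-order} definable: by substituting the specific defining formulas $\psi_i(\cdot,p)$ and $\varphi(\cdot,z)$ into the formula defining $\approx$, the relation $\set{x\mid\psi_i(x,p)}\approx\set{x\mid\varphi(x,z)}$ becomes a first-order assertion about the parameters $p$ and $z$. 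Thus the whole description of $y=\alpha F$ collapses into a single first-order formula $\theta(y,z)$ in the language of set theory, proving the first expressibility claim.

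For the uniform version with a definable sequence $i\mapsto F_i=\set{x\mid\varphi(i,x,z)}$, the argument reduces to the previous case by treating $i$ as an additional parameter. The same disjunctive construction now yields a single formula $\theta(y,i,z)$ expressing $y=\alpha F_i$ uniformly in $i$, with the same parameter $z$ used to present the sequence. This handles the claim for definable families of classes.

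The step I expect to require the most care is the verification that substituting the defining formulas into the relation $\approx$ genuinely produces a first-order formula. This is a metatheoretic point, since the outer ``for any first-order definable equivalence relation'' is a scheme: what is being asserted is really a separate theorem for each particular first-order $\approx$, and the translation into $\theta$ must be carried out formula-by-formula in the metatheory. But once this substitution procedure is spelled out, the remaining bookkeeping (the finite disjunction over $i\leq n$, the minimality of rank for the parameter set $u$, and the fact that $\alpha F$ manifestly depends only on the $\approx$-class of $F$) runs exactly as in the proof of Theorem~\ref{Theorem.Basic-law-V-in-ZF}, so no new obstacle should arise there.
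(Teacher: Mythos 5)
Your proposal matches the paper's proof essentially verbatim: the same Scott-style abstraction object $\<\gcode{\psi_n},u>$ built from the earliest formula defining an $\approx$-equivalent class together with the rank-minimal parameters that do so, the same finite-disjunction argument for first-order expressibility (resting on the observation that substituting specific defining formulas into the first-order definition of $\approx$ yields a first-order assertion about the parameters), and the same reduction of the uniform case to the single-class case by absorbing $i$ as an extra parameter. No gaps; this is the paper's argument.
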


\begin{proof}
We use the same idea as in theorem~\ref{Theorem.Basic-law-V-in-ZF}. Namely, given any class $F=\set{x\mid\varphi(x,z)}$, define the abstraction $\alpha F$ to be the pair $\<\gcode{\psi_n},u>$, whose first coordinate is the code of a formula $\psi_n$ that with some parameter $p$ defines a class that is equivalent to $F$, that is, for which $\set{x\mid\psi_n(x,p)}\approx\set{x\mid\varphi(x,z)}$, and where $\psi_n$ is the first formula to achieve this with some parameter and $u$ is the set of rank-minimal parameters that do so with $\psi_n$. We don't insist that all parameters in $u$ define the same class, but rather just that they define an $\approx$-equivalent class. This assignment will fulfill the Fregean abstraction principle, because $\alpha F$ is determined by and determines the $\approx$-class of $F$, and so we will achieve $\alpha F=\alpha G\Iff F\approx G$.

The identity criterion $y=\alpha F$ will be first-order expressible, if we are provided with the explicit definition of $F=\set{x\mid\varphi(x,z)}$, since there will be only finitely many earlier competitor formulas in the metatheoretic list of formulas, and to express that a given definable class is $\approx$-equivalent to $F$ is first-order expressible.

That case immediately implies that we can also express the identity criterion $y=\alpha F_i$ for a uniformly definable sequence of classes $i\mapsto F_i$, defined by a first-order formula $F_i(x)\iff\varphi(x,i,z)$. The point is that this amounts simply to regarding $i$ as another parameter.
\end{proof}

As I mentioned, this theorem has Basic Law~V as a special case, since the equivalence relation of logical equivalence for definable classes is first-order expressible. In this sense, theorem~\ref{Theorem.Basic-law-V-in-ZF} is a special case of theorem~\ref{Theorem.Fregean-abstraction-classes-first-order}.

Let me finally treat the most difficult case, where we have an equivalence $F\equiv G$ on definable classes that is second-order expressible in Zermelo-Fraenkel set theory equipped with definable classes only.

\begin{theorem}\label{Theorem.Fregean-abstraction-classes-second-order}
 In Zermelo-Fraenkel set theory, suppose that $\equiv$ is a second-order definable equivalence relation on definable classes, in the context where all classes are first-order definable. Then there is a system of abstractions $F\mapsto\alpha F$, associating to each definable class $F$ an abstraction object $\alpha F$, which fulfill the Fregean abstraction principle:
 $$\alpha F=\alpha G\quad\text{ if and only if }\quad F\equiv G.$$
 Furthermore, the identity criterion $y=\alpha F$ is second-order expressible
 $$y=\alpha F\quad\text{ if and only if }\quad \Theta(y,F).$$
\end{theorem}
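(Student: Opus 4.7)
The plan is to adapt the strategy from Theorem~\ref{Theorem.Basic-law-V-in-ZF} (especially statement (1c)) and Theorem~\ref{Theorem.CantorHume}, replacing logical equivalence or equinumerosity by the given second-order definable equivalence $\equiv$. Fix a metatheoretic enumeration $\psi_0,\psi_1,\psi_2,\ldots$ of the first-order formulas of set theory. Given any presentation $F=\set{x\mid\varphi(x,z)}$, I would define $\alpha F$ to be the pair $\<\gcode{\psi_n},u>$, where $\psi_n$ is the first formula on the list for which there exists some parameter $p$ with $\set{x\mid\psi_n(x,p)}\equiv F$, and $u$ is the set of all minimal-rank such parameters $p$ (not requiring that the classes they define coincide, only that they be $\equiv$-equivalent to $F$).

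The Fregean abstraction principle $\alpha F=\alpha G\iff F\equiv G$ then follows essentially by bookkeeping, just as in the earlier theorems: since $\equiv$ is an equivalence relation, the earliest formula $\psi_n$ capable of defining (with some parameter) a class $\equiv$-equivalent to $F$ depends only on the $\equiv$-class of $F$, and likewise the set $u$ of minimal-rank witnessing parameters depends only on that $\equiv$-class. So different presentations of $\equiv$-equivalent classes produce the same pair, and $\equiv$-inequivalent classes produce different pairs by construction.

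The only substantive work is verifying the second-order identity criterion. Here I would assert $y=\alpha F$ by the second-order formula $\Theta(y,F)$ saying: $y$ is a pair $\<\gcode{\psi_n},u>$ where $\psi_n$ is $\Sigma_k$ for some standard $k$, there is a class $T$ that is a $\Sigma_k$-truth predicate, $u$ is the set of minimal-rank parameters $p$ such that the class $\set{x\mid T(\gcode{\psi_n},\<x,p>)}$ is $\equiv$-equivalent to $F$, and no earlier formula $\psi_i$ admits any such parameter. Because $F$ is actually definable, it has a defining formula of some standard $\Sigma_k$ complexity, so the required truth-predicate class $T$ exists (as in the proof of (1c) in Theorem~\ref{Theorem.Basic-law-V-in-ZF}), and the whole assertion is second-order, drawing on both the second-order quantifications built into $\equiv$ and the quantification over partial truth predicates.

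The main obstacle, as before, is arranging that the unbounded metatheoretic complexity of $\psi_n$ can be handled inside the object theory; this is precisely what the partial truth predicates resolve, and it is why the identity criterion here is second-order rather than first-order. One could also wish for a uniform version analogous to statement (1b) of Theorem~\ref{Theorem.Basic-law-V-in-ZF}, and the same proof idea would give it whenever $i\mapsto F_i$ is a uniformly definable family and $\equiv$ is uniformly second-order expressible, simply by treating $i$ as another parameter throughout.
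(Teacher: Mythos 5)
Your proposal is correct and follows essentially the same route as the paper's own proof: the abstraction $\alpha F=\<\gcode{\psi_n},u>$ built from the first formula defining an $\equiv$-equivalent class with the set of rank-minimal witnessing parameters, with the second-order identity criterion obtained by quantifying over the partial $\Sigma_k$-truth predicates together with the assumed second-order expressibility of $\equiv$. If anything, you spell out the verification in somewhat more detail than the paper, which simply defers to the arguments of the earlier theorems.
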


\begin{proof}
We may proceed just as in theorem~\ref{Theorem.CantorHume}. Namely, the abstraction object $\alpha F$ is a pair $\<\gcode{\psi_n},u>$ where $\psi_n$ is the first formula on the metatheoretic list that is able with some parameter $p$ to define a class $\set{x\mid\psi_n(x,p)}$ that is $\equiv$-equivalent to $F$. This is easily seen to fulfill the Fregean abstraction principle requirement, since $\alpha F$ determines and is determined by the $\equiv$-equivalence class of $F$. The identity criterion $y=\alpha F$ will be expressible in second-order set theory in our context, because first of all, we have available the partial truth predicates sufficient to determine the truth of any particular formula $\psi_n(x,p)$, and the $\equiv$-equivalence relation was assumed to be second-order expressible.
\end{proof}

One must disparage the fact that the identity criterion $y=\alpha F$ is merely second-order expressible here, which is surely unsatisfying, but it should not be surprising at all in light of the fact that this is the case where the equivalence relation $\equiv$ itself is merely second-order definable. Theorem~\ref{Theorem.Fregean-abstraction-classes-second-order} includes the case of equinumerosity of classes, and thus has theorem~\ref{Theorem.CantorHume} as a special case.

%
%
%
%
%

\section{Gödel-Bernays and Kelley-Morse set theories}

Let us now investigate the extent to which we may achieve the Fregean abstraction principles in the various standard second-order set theories, such as Gödel-Bernays set theory and Kelley-Morse set theory. In the earlier main theorems, we had considered Zermelo-Fraenkel set theory in the context of all its definable classes, which is a model of Gödel-Bernays set theory (without the axiom of choice or global choice). All those arguments generalize, however, to the context of a \emph{principal} model of Gödel-Bernays set theory, which is a model in which there is a class $Z$ from which all other classes are first-order definable. An important example of these kinds of models arising when performing class forcing over a model of \ZFC, for in such a case one gets a principal model of Gödel-Bernays set theory when taking as classes everything that is definable from the generic filter class $G$. Indeed, the principal models are preserved generally by tame class forcing, since one can amalgamate the new generic filter with the old class parameter.

\begin{observation}
 All the results of theorems~\ref{Theorem.Basic-law-V-in-ZF},~\ref{Theorem.CantorHume}, and~\ref{Theorem.Fregean-abstraction-classes-second-order} hold not just in all models of Zermelo-Fraenkel with definable classes, but also in all principal models of Gödel-Bernays set theory, provided one allows a fixed class parameter in the definitions.
\end{observation}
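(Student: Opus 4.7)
The plan is to carry out each of the three constructions exactly as before, but with the generating class $Z$ of the principal model treated as an extra fixed parameter throughout. Recall that a principal model $(M,\mathcal{C})$ of Gödel--Bernays set theory comes equipped with a distinguished class $Z\in\mathcal{C}$ such that every class in $\mathcal{C}$ has the form $\set{x\mid\varphi(x,z,Z)}$ for some first-order formula $\varphi$ and some set parameter $z\in M$. Thus every class concept in such a model is already presented by a triple $(\varphi,z,Z)$, which is the only input the earlier constructions needed.

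First I would redo theorem~\ref{Theorem.Basic-law-V-in-ZF}. Fix a metatheoretic enumeration $\psi_0,\psi_1,\ldots$ of formulas in the first-order language of set theory, now thought of as formulas in one free variable $x$, a set parameter $p$, and one distinguished class variable $\dot Z$ to be interpreted by $Z$. Given a class $F=\set{x\mid\varphi(x,z,Z)}$, define $\varepsilon F=\<\gcode{\psi_n},u>$, where $\psi_n$ is the earliest formula in the enumeration which with $\dot Z$ interpreted as $Z$ and some set parameter $p$ defines $F$, and $u$ is the set of rank-minimal such $p$. Since $Z$ occurs only as a fixed class, the statement ``$y=\varepsilon F$'' is expressible by a first-order formula $\theta(y,z,Z)$, exactly as in (1a), provided one permits $Z$ as a fixed class parameter. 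The uniform version (1b) and the second-order version (1c) go through unchanged, with the partial truth predicates of (1c) replaced, in this context, by the $\Sigma_k$ partial truth predicates relative to the distinguished class $Z$, which are available because the principal model is in particular a model of Gödel--Bernays set theory. Basic Law~V itself is a purely combinatorial consequence of the definition, independent of the definability details, so it survives immediately.

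Next I would run the same modification for theorem~\ref{Theorem.CantorHume} and theorem~\ref{Theorem.Fregean-abstraction-classes-second-order}. Given any class $F=\set{x\mid\varphi(x,z,Z)}$ in the principal model and the equivalence relation at hand ($\equinumerous$ for Cantor--Hume, or the specified $\equiv$ for general abstraction), define $\num F$ or $\alpha F$ to be the pair $\<\gcode{\psi_n},u>$ where $\psi_n$ is the earliest formula admitting some set parameter $p$ for which $\set{x\mid\psi_n(x,p,Z)}$ is equivalent to $F$ in the relevant sense, and $u$ collects the rank-minimal such $p$. The Fregean invariance property follows as before, since the defining data depend only on the equivalence class of $F$. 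The identity criterion is expressible in the second-order language augmented by $Z$ as a parameter, using $Z$-relative partial truth predicates to witness definable bijections (for Cantor--Hume) or the given second-order equivalence (for general abstraction).

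The only step that requires any real care is verifying that the ingredients the earlier proofs relied on remain available in a principal model of Gödel--Bernays. This amounts to two observations: that $Z$-relative partial $\Sigma_k$-truth predicates exist as classes in the principal model (they are themselves first-order definable from $Z$, hence members of $\mathcal{C}$), and that quantification over ``all definable classes'' in the earlier arguments can be recast, in the principal model, as first-order quantification over codes $(\gcode{\psi},p)$ for formulas with set parameter $p$, interpreted relative to $Z$. Once these are in hand, the bookkeeping is identical to the earlier theorems, and the observation follows.
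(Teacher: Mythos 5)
Your proposal is correct and follows essentially the same route as the paper, which likewise just relativizes all the earlier constructions by carrying the distinguished class $Z$ of the principal model as a fixed parameter in the definition of $\varepsilon F$, $\num F$, and $\alpha F$ (taking the earliest formula that defines, or defines an equivalent of, $F$ from some set parameter together with $Z$, paired with the set of rank-minimal such parameters). Your additional checks---that the $Z$-relative partial truth predicates exist as classes of the principal model and that quantification over all classes can be recast as quantification over codes $(\gcode{\psi},p)$ interpreted relative to $Z$---are exactly the points the paper leaves implicit, and they are handled correctly.
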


The earlier proofs simply relativize, accommodating the fixed class parameter $Z$ into the definitions of the abstraction objects. The extension $\alpha F$ of a class $F=\set{x\mid \varphi(x,z,Z)}$, for example, will be the pair $\<\gcode{\psi_n},u>$ where $\psi_n$ is the first formula on the list able to define $F$ with some parameter $p$ and class $Z$, and $u$ is the set of rank-minimal such parameters $p$ that do so. And similarly with the other Fregean abstraction invariants $\num F$ and $\alpha F$ for the other instances of Fregean abstraction.

The main result of~\cite{HamkinsLinetskyReitz2013:PointwiseDefinableModelsOfSetTheory} shows that every countable model of Gödel-Bernays set theory (with countably many sets and classes) has an extension to a pointwise definable model, a model of \GBC\ in which every set and class is first-order definable without parameters. Such a model falls under the scope of the original theorem, since the first-order \ZFC\ content of the model suffices to recover definably all the original classes. In this sense, any countable set-theoretic universe can be extended to one where there are definable solutions for Fregean abstraction.

What I should like to observe next, however, is that one cannot achieve this in a model of Kelley-Morse set theory, for Russell's argument can be implemented in \KM\ with force.

\begin{theorem}
 Kelley-Morse set theory is inconsistent with a definable solution of the extension-assignment problem (even second-order definable), which is to say, \KM\ is inconsistent with a definable solution to Basic Law~V.
\end{theorem}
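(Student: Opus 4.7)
The plan is to invoke Russell's argument exactly as formalized in Theorem~\ref{Theorem.Russell-refutes-BLV}, noting that what the earlier deflationary constructions carefully avoided—namely, the formation of the \emph{falling-under} concept as a class in the object theory—is precisely what impredicative comprehension in \KM\ permits. Suppose toward contradiction that in \KM\ there is a definable (second-order definable suffices) solution to the extension-assignment problem, say a formula $\Theta(F,y)$ in the two-sorted language of \KM, possibly with class parameters, such that for every class $F$ and every set $y$ we have $y=\varepsilon F\iff \Theta(F,y)$, and such that Basic Law~V holds: $\varepsilon F=\varepsilon G$ if and only if $\forall x(Fx\iff Gx)$.

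Using $\Theta$, I would next form the falling-under class and then the Russell class. Define $\Phi(y,x)$ to be the second-order formula $\exists F\,\bigl(\Theta(F,y)\wedge F x\bigr)$, expressing ``$x$ falls under the concept of which $y$ is the extension.'' Because \KM's class comprehension scheme is fully impredicative, accepting arbitrary formulas with both set and class quantifiers, the class $R=\set{x\mid \neg\Phi(x,x)}=\set{x\mid \forall F\,(\Theta(F,x)\to\neg F x)}$ exists as a class in the \KM\ universe. This is the exact point at which \KM\ diverges from \GBc\ and from the $\ZF$-with-definable-classes context of the earlier theorems: there the formula defining $R$ quantifies over classes and so is not a legitimate comprehension instance, which is what left room for the deflationary solutions to exist.

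Having obtained $R$ as a genuine class, I would let $r=\varepsilon R$, well-defined by hypothesis, and then run Russell's argument verbatim as in the proof of Theorem~\ref{Theorem.Russell-refutes-BLV}. Namely, $Rr$ holds iff for every class $F$ with $\Theta(F,r)$ we have $\neg F r$; but $\Theta(R,r)$ holds by choice of $r$, and conversely any $F$ with $\Theta(F,r)$ satisfies $\varepsilon F=\varepsilon R$, hence by Basic Law~V agrees extensionally with $R$, so $Fr\iff Rr$. Thus $Rr\iff\neg Rr$, a contradiction.

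The substantive step is the second one: confirming that the defining formula for $R$ truly falls under \KM's comprehension scheme. This hinges on the fact that $\Theta$ is a formula of the two-sorted language (with at most bounded-complexity class quantifiers, but this is immaterial in \KM) and that \KM\ imposes no predicativity restriction; the rest of the argument is then a direct quotation of the earlier Russell refutation of Basic Law~V, now carried out entirely inside the object theory rather than as an external meta-theoretic obstruction. This is the sense in which, as the paper puts it, Russell's argument can be implemented in \KM\ ``with force.''
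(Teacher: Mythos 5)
Your proposal is correct and follows essentially the same route as the paper's own proof: form the Russell class $R=\set{x\mid \forall F\,(\Theta(F,x)\to\neg Fx)}$ via \KM's impredicative comprehension, take $r=\varepsilon R$, and use Basic Law~V to conclude that every $F$ with $\varepsilon F=r$ agrees extensionally with $R$, yielding $Rr\iff\neg Rr$. Your explicit emphasis that the class quantifier in the defining formula is exactly what \KM\ comprehension licenses (and \GBc\ does not) is a point the paper makes only in the surrounding discussion, but the argument itself is the same.
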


\begin{proof}
Assume in Kelley-Morse set theory that we have a definable assignment $F\mapsto\varepsilon F$, in the sense that
 $$\forall x\forall F\ \bigl[ x=\varepsilon F\quad\iff\quad \theta(x,F)\bigr]$$
for some formula $\theta(x,F)$ in the second-order language of set theory. It would be fine for the argument if this definition involved second-order quantifiers. It follows that the Julius-Caesar-like property of $x$ being an extension $\exists F\, x=\varepsilon F$, is second-order expressible in Kelley-Morse set theory.

Let us also assume that this system of extension-assignment fulfills Basic Law~V:
 $$\forall F,G\ \bigl[ \varepsilon F=\varepsilon G\quad\iff\quad \forall x\, (Fx\iff Gx)\bigr].$$
In the Russellian style, let $R$ be the class of all $x$ that does not fall under any concept of which it is the extension. That is,
 $$R(x)\quad\Iff\quad \forall F\ \bigl[x=\varepsilon F\implies \neg Fx\bigr].$$
If $r=\varepsilon R$, then $R(r)$ asserts that $\neg F(r)$ for any $F$ for which $r=\varepsilon F$, and by the Basic Law~V requirement all such $F$ agree on whether $F(r)$ or not. But $R$ is such an $F$, and so $R(r)$ if and only if $\neg R(r)$, which is a contradiction.
\end{proof}

Theorem~\ref{Theorem.Basic-law-V-in-ZF} shows that the corresponding theorem is not provable in Gödel-Bernays set theory, because that theorem shows that every model of Zermelo-Fraenkel \ZF\ set theory, when equipped with its definable classes, does admit a definable system of extension assignments $F\mapsto\varepsilon F$.

%
%
%
%

\section{Fregean ontology without infinity}

Although I have provided the analysis of this paper for Zermelo-Fraenkel \ZF\ set theory, which includes the axiom of infinity, nevertheless the entire analysis can be undertaken in finite set theory $\ZFfin$, which replaces the infinity axiom with its negation and which includes the foundation axiom in the form of the $\in$-induction schema. I find finite set theory to be quite a nice realm for Fregean thinking.

\begin{observation}
 All the results of theorems~\ref{Theorem.Basic-law-V-in-ZF},~\ref{Theorem.CantorHume}, and~\ref{Theorem.Fregean-abstraction-classes-second-order} hold in models of finite set theory $\ZFfin$, equipped with their definable classes.
\end{observation}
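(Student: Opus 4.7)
The plan is to observe that each of the earlier constructions relativizes essentially unchanged once one checks that the ingredients they rely on are available in $\ZFfin$. First I would enumerate the ingredients actually used in the proofs of Theorems~\ref{Theorem.Basic-law-V-in-ZF}, \ref{Theorem.Fregean-abstraction-classes-first-order}, \ref{Theorem.CantorHume}, and \ref{Theorem.Fregean-abstraction-classes-second-order}: namely (a) a metatheoretic enumeration of formulas $\psi_0,\psi_1,\ldots$ with internal codes $\gcode{\psi_n}$ as set objects, (b) a well-defined rank function on sets, so that ``set of rank-minimal parameters'' makes sense, (c) Kuratowski ordered pairs $\<x,y>$, and (d) for the second-order portions of the results, first-order $\Sigma_k$-truth predicates available as definable classes.

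Next I would verify each ingredient for $\ZFfin$. Since $\ZFfin$ is bi-interpretable with Peano arithmetic, syntactic coding in $\ZFfin$ is as robust as arithmetization in $\PA$; in fact, the natural set-theoretic representation of formulas as hereditarily finite sets works exactly as in $\ZF$. The $\in$-induction schema, which is included in $\ZFfin$, yields a well-defined rank function on every set. Pairing, and hence ordered pairs, is available. And the definitions of $\Sigma_k$-truth predicates by the familiar first-order schematic recursion make no use of the axiom of infinity and so transfer directly to $\ZFfin$. Given all this, rerunning each proof is routine: for Basic Law~V one defines $\varepsilon F=\<\gcode{\psi_n},u>$ exactly as before, and the finite disjunction over $i\leq n$ yielding the expressibility claims (1a)--(1c) is a purely metatheoretic device, independent of infinity; the arguments for the Cantor-Hume principle and for second-order Fregean abstraction transfer in the same way, with the given equivalence relation replacing logical equivalence.

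The only subtlety I would flag, rather than an actual obstacle, concerns the meaning of the set case of Cantor-Hume in $\ZFfin$. From inside $\ZFfin$ every set is hereditarily finite, so Scott's trick on sets degenerates essentially into ordinary finite counting. The class case remains nontrivial, however, since proper classes persist (the class of ordinals, the whole universe, and so on) and the class-sized equinumerosity relation is genuinely second-order. The abstraction scheme of Theorem~\ref{Theorem.CantorHume} still produces coherent number objects with respect to whatever class equinumerosity structure the ambient model exhibits, and so the observation holds as stated, with no new obstruction arising from the failure of infinity.
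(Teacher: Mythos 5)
Your proposal is correct and follows essentially the same route as the paper, which simply asserts that ``the proofs go through essentially unchanged''; your itemized verification of the ingredients (formula enumeration and coding, the rank function via $\in$-induction, pairing, and the partial truth predicates) is a reasonable fleshing-out of that claim. The one point where you diverge from the paper is in your closing ``subtlety'': the paper observes that $\ZFfin$ proves a definable global well-ordering of the universe in order type $\Ord$ (via the definable bijection of the hereditarily finite sets with the finite von Neumann ordinals, just as $V_\omega$ is definably bijective with $\omega$), which places one in the \emph{easy} case for definably picking representatives. In particular, every proper class is then definably equinumerous with $\Ord$, so there is only one proper-class equinumerosity class and the class case of the Cantor--Hume principle degenerates to the trivial situation already handled under global choice, rather than remaining ``genuinely second-order'' and nontrivial as you suggest. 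This does not affect correctness---the general second-order machinery of theorem~\ref{Theorem.CantorHume} still applies---but you have flagged as a residual difficulty something that the failure of infinity actually simplifies away.
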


The proofs go through essentially unchanged. One thing that is a bit nicer in finite set theory is that there is a definable global well-ordering---this theory proves that the whole universe is in bijective correspondence with the finite von Neumann ordinals (just as $V_\omega$ can define a bijection of itself with $\omega$), and so we are in the easy case for definably picking representatives in every definable class.

\printbibliography

\end{document}